\documentclass[final,onefignum,onetabnum]{siamart190516}
%\documentclass[review,onefignum,onetabnum]{siamart190516}

% Verander notes: cref ipv eqref. Ook cref ipv ref. Check dat hele eq of thm geref is. Emph goed nie in defs?
% NB: Check figure placement. Sy dink dis dalk goed om konstant te wees: Bv. Figure 2 is tans voor die teks en nie na dit nie. En van die ander is dalk te ver van die actual teks waar dit gebruik word.
% Tedious calculations?

\usepackage{amssymb, mathtools, tikz}
\usetikzlibrary{shapes, patterns}
\makeatletter % Mod correct
    \def\imod#1{\allowbreak\mkern10mu({\operator@font mod}\,\,#1)}
\makeatother

% Add a serial/Oxford comma by default.

% Used for creating new theorem and remark environments
\newsiamremark{remark}{Remark}
\newsiamremark{hypothesis}{Hypothesis}
\crefname{hypothesis}{Hypothesis}{Hypotheses}
\newsiamthm{claim}{Claim}

% Self: (Kyk na verskil van bg theorems)
%\newtheorem{dev}{Definition}[section]

%\theoremstyle{definition}
%\newtheorem{strat}{Strategy}[theorem]

%\theoremstyle{remark}
%\newtheorem{case}{Case}[theorem]

\newsiamremark{strat}{Strategy}
\newsiamremark{case}{Case}

% Sets running headers as well as PDF title and authors
\headers{The Localization Game}{J. Boshoff, A. Roux} 

% Title. If the supplement option is on, then "Supplementary Material"
% is automatically inserted before the title.
\title{The Localization Game On Cartesian Products}

% Authors: full names plus addresses.
\author{Jeandré Boshoff\thanks{Department of Applied Mathematics, Stellenbosch University, Stellenbosch, ZA 
  (\email{jboshoff@sun.ac.za}, \email{rianaroux@sun.ac.za}).\funding{This work is based on the research supported by the National Research Foundation of South Africa (Grant number: 121931).}}
\and Adriana Roux\footnotemark[1]}

% Optional PDF information
\ifpdf
\hypersetup{
  pdftitle={The Localization Game On Cartesian Products},
  pdfauthor={J. Boshoff and A. Roux}
}
\fi

% The next statement enables references to information in the
% supplement. See the xr-hyperref package for details.

% Sit in as nodig:
% \externaldocument{ex_supplement}

% FundRef data to be entered by SIAM
%<funding-group specific-use="FundRef">
%<award-group>
%<funding-source>
%<named-content content-type="funder-name"> 
%</named-content> 
%<named-content content-type="funder-identifier"> 
%</named-content>
%</funding-source>
%<award-id> </award-id>
%</award-group>
%</funding-group>

\begin{document}

\maketitle

% REQUIRED
\begin{abstract}
    The localization game is played by two players: a Cop with a team of $k$ cops, and a Robber.
    The game is initialised by the Robber choosing a vertex $r \in V$, unknown to the Cop.
    Thereafter, the game proceeds turn based.
    At the start of each turn, the Cop probes $k$ vertices and in return receives a distance vector.
    If the Cop can determine the exact location of $r$ from the vector, the Robber is located and the Cop wins.
    Otherwise, the Robber is allowed to either stay at $r$, or move to $r'$ in the neighbourhood of $r$.
    The Cop then again probes $k$ vertices.
    The game continues in this fashion, where the Cop wins if the Robber can be located in a finite number of turns.
    The localization number $\zeta(G)$, is defined as the least positive integer $k$ for which the Cop has a winning strategy irrespective of the moves of the Robber.
    In this paper, we focus on the game played on Cartesian products.
    We prove that $\zeta( G \square H) \geq \max\{\zeta(G), \zeta(H)\}$ as well as $\zeta(G \square H) \leq \zeta(G) + \psi(H) - 1$ where $\psi(H)$ is a doubly resolving set of $H$.
    We also show that $\zeta(C_m \square C_n)$ is mostly equal to two.
\end{abstract}

% REQUIRED. Sit in?
\begin{keywords}
  localization game, Cartesian products, metric dimension, doubly resolving sets, imagination strategy 
\end{keywords}

% REQUIRED. Sit in?
\begin{AMS}
  05C12, 05C57, 05C76
\end{AMS}

\section{Introduction}
    The localization game is played on a simple, connected, undirected graph $G=(V,E)$.
    Two players are involved in this game: a Cop who has a team of $k$ cops, and a Robber.
    To start the game, the Robber chooses a vertex $r \in V$, unknown to the Cop.
    After this, the game proceeds turn based.
    
    At the start of each turn, the Cop probes $k$ vertices $B = \lbrace b_1, b_2, \ldots, b_k \rbrace $.
    In return, the Cop receives the vector $\vec{D}(\{r\}, B) = [d_1, d_2, \ldots, d_k]$ where $d_i = d_G(r,b_i)$ is the distance in $G$ from $r$ to $b_i$ for $i = 1, 2, \ldots, k$.
    If the Cop can determine the exact location of $r$ from $\vec{D}(\{r\}, B)$, the Robber is located and the Cop wins.
    Otherwise, the Robber is allowed to either stay at $r$, or move to $r'$ in the neighbourhood $N[r]$ of $r$.
    The Cop then again probes $k$ vertices.
    These $k$ vertices are allowed to be the same as in previous turns.
    The game continues in this fashion, where the Cop wins if the Robber can be located in a finite number of turns.
    If the Cop fails to locate the Robber in a finite number of turns, the Robber wins.
    The localization number $\zeta(G)$, is defined as the least positive integer $k$ for which the Cop has a winning strategy irrespective of the moves of the Robber.
    Therefore if less than $\zeta(G)$ cops are used to play the game on $G$, it is possible that the Cop never locates the Robber.
    Thus an avoidance strategy for the Robber entails proving that for any sequence of probes by the Cop, the distance vector to the Robber's location at any given turn is not unique.
    
    A version of the localization game with only one cop was introduced by Seager in 2012 \cite{1cop} and studied further in \cite{1copBrandt}, \cite{1copCarraher} and \cite{1copseager}. 
    The localization game itself is a variant of the game of Cops and Robbers and was introduced independently by Bosek et al. \cite{Localization} and Haslegrave et al. \cite{Independent} in 2018.
    In the same year, the game was further studied by Bonato et al. \cite{Newlocbounds} and another variation of the game was introduced by Bosek et al. \cite{Centroidal}.
    In this paper we provide upper and lower bounds on the Cartesian product of general graphs.
    We give special consideration to the product of cycles and show that the localization number of nearly all products of cycles is two.
    
    This paper is organized as follows. 
    In the next section, we give some basic results.
    Lower bounds for $G \square H$ are given in \cref{sec: lowbound} and an upper bound is given in \cref{sec: upbound}.
    In \cref{sec: cyclicgrid} we calculate the localization number of the Cartesian product of two cycles.
    
\section{Basic results}
    The localization number is related to the metric dimension of a graph.
    To define the metric dimension, we start with a resolving set:
    \begin{definition}[Resolving set \cite{Dimensions}]
        A set of vertices $S \subseteq G$ is a resolving set of graph $G$ if every vertex in $G$ is uniquely defined by its distance to the vertices in $S$.
    \end{definition}
    \begin{definition}[Metric dimension \cite{Dimensions}]
        The metric dimension  $\dim(G)$ of a graph $G$ is defined as the minimum cardinality of a set $S \subseteq G$ such that $S$ resolves $G$.
    \end{definition}
    Note that $\dim(G)$ can equivalently be defined as the smallest positive integer $k$ such that the Cop locates the Robber in one turn and hence
    \begin{equation} \label{eq: zeta-dim bound}
        \zeta(G) \leq \dim(G) \leq n-1
    \end{equation}
    where $n$ is the order of the graph.
    The localization number and the metric dimension may be equal, for example, $\zeta(P_n) = \dim(P_n) = 1$ and $\zeta(K_n) = \dim(K_n) = n-1$.
    However, the difference between these two parameters can be arbitrarily large.
    Bosek et al. \cite{Localization} showed that $\zeta(K_{2,n}) = 2$, but it is known that $\dim(K_{2,n}) = n$ for $n > 2$.
    
    \begin{definition}[Hideout \cite{1copseager}]
        A hideout is defined as a subgraph $H$ of $G$ where the robber can win by remaining on the vertices of $H$.
    \end{definition}
    \begin{lemma}[\cite{1copCarraher}] \label{lem: loc not 1}
        Let $G$ be any graph containing a cycle of length at most five, where the localization game is played with one cop.
        Then this cycle is a hideout such that $\zeta(G) \neq 1$.
    \end{lemma}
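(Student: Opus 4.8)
The plan is to exhibit an explicit avoidance strategy for the Robber in which it never leaves the vertex set of the given cycle $C = v_0 v_1 \cdots v_{m-1}$ (indices taken mod $m$), with $m \le 5$. Following the reformulation in the introduction, it suffices to show that the Robber can play so that after every probe at least two vertices of $C$ remain consistent with the entire sequence of distance vectors received so far; then a single Cop can never isolate a unique location, which by the definition of a hideout means the Robber wins and hence $\zeta(G)\neq 1$. Writing $N_C[u]$ for the closed neighbourhood of $u$ taken inside $C$, I would track the Cop's \emph{consistent set} $S_t \subseteq V(C)$ (the cycle vertices compatible with all responses up to turn $t$) and maintain by induction the invariant that $|S_t| \ge 2$ with the Robber's true vertex lying in $S_t$. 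Since the Robber may move along a cycle edge, from the Cop's viewpoint the consistent set inflates to $N_C[S_t]$ before the next probe, and after the Cop probes $b$ it collapses to those vertices of $N_C[S_t]$ realising the distance the Robber actually reports.

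The engine of the argument is a single metric observation. For a probe $b \in V(G)$ set $g(u) = d_G(u,b)$. If $uu'$ is an edge of $C$ then it is an edge of $G$, so the triangle inequality gives $|g(u)-g(u')| \le 1$; thus $g$ changes by at most one along the cycle. Looking at a vertex where $g$ attains its maximum on $C$ together with its two cycle-neighbours immediately shows that $g$ is \emph{not injective} on $V(C)$ whenever $m \ge 3$ (either a neighbour already equals the maximum, or both neighbours equal the maximum minus one), so a single probe can never by itself cut the Robber's possibilities on $C$ below two. I would also record the sharper fact, proved by the same local $\pm 1$ bookkeeping, that on $C_5$ the function $g$ is non-injective even on any four consecutive vertices $v_{i-1}v_i v_{i+1} v_{i+2}$; this is exactly what is needed when $N_C[S_t]$ fails to be all of $C$.

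With these in hand the inductive step is a short case check on $N_C[S_t]$. When $N_C[S_t] = V(C)$ — automatic for $m \in \{3,4\}$, and for $m=5$ whenever $S_t$ is not an adjacent pair — non-injectivity on $C$ yields a distance class of size at least two, and one checks that no such class can sit entirely in $V(C)\setminus N_C[r]$, where $r$ is the Robber's vertex: this complement has at most one vertex for $m\le 4$, and for $m=5$ it has two vertices but a brief argument rules out a full repeated class occupying exactly those two. Hence the Robber can step into a size-$\ge 2$ class and preserve $|S_{t+1}| \ge 2$. The remaining case is $m=5$ with $S_t$ an adjacent pair, where $N_C[S_t]$ is only a path on four vertices; there I would invoke the four-vertex non-injectivity and note that $N_C[r]$ omits just one vertex of that path, again landing the Robber on a shared distance value. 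The one genuine obstacle is this reachability bookkeeping on $C_5$, namely verifying that a repeated distance class is always attainable from the Robber's current vertex in a single move, and it is precisely here that the hypothesis $m \le 5$ enters: once $m \ge 6$ the set $N_C[r]$ omits three or more vertices, a repeated class can hide entirely outside the Robber's reach, and the invariant breaks. Propagating the invariant across all turns shows the cycle is a hideout, completing the proof.
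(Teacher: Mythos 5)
The paper gives no proof of this lemma --- it is quoted from \cite{1copCarraher} --- so your attempt must be judged on its own terms; your architecture (a consistent-set invariant $|S_t|\ge 2$, the observation that $g(u)=d_G(u,b)$ changes by at most $1$ along cycle edges, non-injectivity of $g$ on the whole cycle for $m\ge 3$, and the sharper non-injectivity on any four consecutive vertices of $C_5$) is sound and is essentially the standard route. However, one step is false as stated: for $m=5$ with $N_C[S_t]=V(C)$ you claim that ``no such class can sit entirely in $V(C)\setminus N_C[r]$'' and that ``a brief argument rules out a full repeated class occupying exactly those two'' unreachable vertices. No such argument exists. Counterexample: take $G=C_5$ itself and suppose the Cop probes the Robber's current vertex $v_i$; then the distances around the cycle are $0,1,1,2,2$, and the class $\{v_{i+2},v_{i+3}\}$ at distance $2$ is precisely the adjacent pair outside $N_C[r]$. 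A repeated class \emph{can} hide entirely beyond the Robber's reach, so the justification you give for this case is broken.

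The gap is local and repairable with a tool you already established, because what the induction actually needs is weaker: not that no class avoids $N_C[r]$, but that \emph{some} repeated class meets $N_C[r]$. Apply your four-vertex non-injectivity to the window $v_{i+3}, v_{i+4}, v_i, v_{i+1}$ (four consecutive vertices of $C_5$, of which only $v_{i+3}$ lies outside $N_C[r]=\{v_{i+4},v_i,v_{i+1}\}$): any repeated pair inside this window uses at most one copy of $v_{i+3}$ and hence contains a vertex of $N_C[r]$, and since $N_C[S_t]=V(C)$ the entire class survives into $S_{t+1}$. (In the counterexample above, this is the class $\{v_{i-1},v_{i+1}\}$ at distance $1$.) This is the same trick you already use correctly in the adjacent-pair case, where the window is $N_C[S_t]$ itself and $N_C[r]$ omits only one of its four vertices. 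With that substitution the induction closes; the rest of your bookkeeping --- the inflation of the consistent set to $N_C[S_t]$, the automaticity of $N_C[S_t]=V(C)$ for $m\in\{3,4\}$ and for non-adjacent pairs when $m=5$, and the deferred choice of the Robber's initial vertex, which matches the paper's framing that an avoidance strategy need only show the distance vector is never unique --- checks out.
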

    
    The remainder of the article will focus on the localization game played on the Cartesian product of two graphs.
    \begin{definition}[Cartesian product]
        The Cartesian product $G \square H$ of two graphs $G$ and $H$ is a graph with vertex set the Cartesian product $V(G) \times V(H)$.
        Further two vertices $(u,u')$ and $(v,v')$ in $G \square H$ are adjacent if and only if either $u=v$ and $d_H(u',v')=1$, or $u'=v'$ and $d_G(u,v)=1$.
    \end{definition}
    For the graph $G \square H$ where $G$ has order $m$ and $H$ has order $n$, label the vertices $v_{i,j}$ for $i \in \{0,1,\ldots,m-1 \}$ and $j \in \{0,1,\ldots,n-1 \}$ such that $v_{0,0}$ is the bottom left vertex and the grid is embedded on the positive quadrant of a Cartesian coordinate system.
    The indices $i$ and $j$ of $v_{i,j}$ will be calculated modulo $m$ and $n$ respectively.
    
    \begin{lemma}[\cite{dimLandmarks}] \label{lem: grid dim}
        For $d \geq 2$, the metric dimension of a $d$-dimensional grid is $d$.
    \end{lemma}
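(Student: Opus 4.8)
The plan is to realise the $d$-dimensional grid as the Cartesian product $P_{n_1} \square \cdots \square P_{n_d}$ of $d$ paths, with vertices identified with integer points $v=(v_1,\dots,v_d)$ satisfying $0 \le v_k \le n_k-1$. The structural fact I would record first is that distances are given by the $\ell_1$ (taxicab) metric, $d(u,v)=\sum_{k=1}^d |u_k-v_k|$; this holds because in a Cartesian product distance is the sum of the distances in the factors and each factor is a path. With this formula the proof splits into the two inequalities $\dim(G) \le d$ and $\dim(G) \ge d$.

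For the upper bound I would exhibit an explicit resolving set of size $d$. Take $o=(0,\dots,0)$ together with the corners $c_k$ having $k$-th coordinate $n_k-1$ and all other coordinates $0$, for $k=1,\dots,d-1$. For a vertex $v$ write $s=\sum_{k} v_k$; then $d(v,o)=s$ and $d(v,c_k)=(n_k-1)+s-2v_k$. Hence $s$ is read off from the first probe, each $v_k$ for $k\le d-1$ is recovered as $\tfrac12\big((n_k-1)+s-d(v,c_k)\big)$, and finally $v_d=s-\sum_{k<d}v_k$. Thus the distance vector determines $v$ uniquely, so these $d$ vertices resolve the grid and $\dim(G) \le d$.

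For the lower bound I would argue that $d-1$ landmarks cannot resolve. Fix landmarks $w_1,\dots,w_{d-1}$. On any region where the sign of $v_k-w_{i,k}$ is constant for every $i,k$, each distance function $v\mapsto d(v,w_i)$ is affine with gradient $\varepsilon_i\in\{-1,+1\}^d$. Since $d-1$ vectors cannot span $\mathbb{R}^d$, there is a nonzero integer vector $z$ with $\langle \varepsilon_i,z\rangle=0$ for all $i$; concretely, working near the ``top'' corner where every $v_k \ge w_{i,k}$, so that each $\varepsilon_i$ is the all-ones vector, one may simply take $z=e_a-e_b$ with $\sum_k z_k=0$ (here $e_a,e_b$ are standard basis vectors). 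Translating a suitable vertex $v$ by $z$ then leaves every $d(v,w_i)$ unchanged, producing two distinct vertices with identical distance vectors, so no set of size $d-1$ resolves and $\dim(G) \ge d$.

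The step I expect to be the main obstacle is the discrete bookkeeping in the lower bound: I must choose the base vertex $v$ so that both $v$ and $v+z$ lie in the grid and fall in the same sign region relative to every landmark, which can fail if landmarks sit on boundary coordinates or if some side $n_k$ is too short. Making the translation argument robust therefore requires the grid to be sufficiently large in each direction — this is exactly the regime in which the clean count ``$d-1$ gradients cannot span $\mathbb{R}^d$'' turns into an honest pair of colliding vertices, and handling the small or boundary cases carefully is the delicate part of the proof.
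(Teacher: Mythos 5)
The paper does not prove this lemma at all: it is quoted from \cite{dimLandmarks} and invoked only in the case $d=2$, so your proposal has to stand on its own. Its first half does: the $\ell_1$-distance formula and the resolving set $\{o,c_1,\dots,c_{d-1}\}$, with $s$ recovered from $d(v,o)$ and each $v_k$ from $d(v,c_k)=(n_k-1)+s-2v_k$, is exactly the standard construction and correctly gives $\dim(G)\le d$.

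The lower bound, however, has a genuine gap, and it is not merely the ``delicate bookkeeping'' you flag at the end. As written, your translation argument never uses the hypothesis that there are only $d-1$ landmarks: in the region above all landmarks every gradient is the all-ones vector no matter how many landmarks there are, so if taking $z=e_a-e_b$ there were legitimate, it would equally defeat your own $d$-element resolving set. Of course it does not, because relative to the corners $c_k$ the region where $v_k\ge w_{i,k}$ for all $i,k$ degenerates to a line, and $v+z$ exits the sign region — which shows that the entire content of the lower bound lies in producing a common sign region that contains \emph{two} grid points differing by a kernel vector of the $d-1$ gradients. That step genuinely needs the sides to be long relative to $d$ (e.g.\ by pigeonhole: the landmark coordinates cut each axis into at most $d$ intervals, so some product box has all sides of length roughly $n_k/d$, and the kernel of $d-1$ vectors in $\{\pm1\}^d$ contains a nonzero integer vector with small entries). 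Worse, without such a largeness hypothesis the inequality $\dim(G)\ge d$ is simply \emph{false}: the grid $P_2\square P_2\square P_2\square P_2\square P_2$ is the hypercube $Q_5$, whose metric dimension is $4<5$, and more generally $\dim(Q_d)\sim 2d/\log_2 d$; this is a known erratum to the statement in \cite{dimLandmarks}. So your plan can only be completed either for $d=2$ — the only case this paper uses, where one landmark fails trivially because any two of its neighbours are equidistant from it — or under an explicit assumption that every side length $n_k$ is sufficiently large compared with $d$.
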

    \begin{proposition}
        Let $G_{m,n}$ be the Cartesian product of two paths, with $m,n \geq 2$.
        Then $\dim (G_{m,n}) = \zeta(G_{m,n}) = 2$.
    \end{proposition}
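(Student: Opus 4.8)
The plan is to establish the two equalities separately, drawing on the bounds already assembled in this section. For the metric dimension, I would observe that $G_{m,n} = P_m \square P_n$ is precisely a $2$-dimensional grid, so \cref{lem: grid dim} applied with $d = 2$ gives $\dim(G_{m,n}) = 2$ immediately. This is the only ingredient needed for that half of the statement.

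For the localization number I would bound it from above and below. The upper bound is free: by \cref{eq: zeta-dim bound} we have $\zeta(G_{m,n}) \leq \dim(G_{m,n}) = 2$. For the lower bound I would argue that a single cop cannot win. Since $m, n \geq 2$, the four vertices $v_{0,0}, v_{1,0}, v_{1,1}, v_{0,1}$ induce a $4$-cycle in $G_{m,n}$, so the grid contains a cycle of length at most five. By \cref{lem: loc not 1} this cycle is a hideout, whence $\zeta(G_{m,n}) \neq 1$; together with the trivial bound $\zeta(G_{m,n}) \geq 1$ this forces $\zeta(G_{m,n}) \geq 2$.

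Combining the two inequalities gives $\zeta(G_{m,n}) = 2 = \dim(G_{m,n})$, as claimed. I do not anticipate a genuine obstacle here, since the argument is essentially an assembly of the cited results. The only point needing a moment of care is verifying that the hypotheses of \cref{lem: loc not 1} are satisfied, namely exhibiting an explicit short cycle, which the assumption $m, n \geq 2$ guarantees.
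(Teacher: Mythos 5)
Your proposal is correct and follows the paper's own proof exactly: the upper bound $\zeta(G_{m,n}) \leq \dim(G_{m,n}) = 2$ via \cref{lem: grid dim} and \cref{eq: zeta-dim bound}, and the lower bound via a $4$-cycle and \cref{lem: loc not 1}. Your only addition is writing out the explicit cycle $v_{0,0}, v_{1,0}, v_{1,1}, v_{0,1}$, which the paper leaves implicit.
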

    \begin{proof}
        By \cref{lem: grid dim}, the dimension of $G_{m,n}$ is two and therefore $\zeta(G_{m,n}) \leq 2$.
        Note that the grid $G_{m,n}$ contains a cycle of length four and thus by \cref{lem: loc not 1}, $\zeta(G_{m,n}) \geq 2$.
    \end{proof}
    The imagination strategy introduced by Brešar et al. \cite{Domination} is a technique used to find bounds on parameters concerning games on graphs.
    The idea of the imagination strategy is that one of the players imagines another appropriate game and plays in it according to a known winning strategy.
    As an example, say the localization game is played on some graph $G$.
    Assume the Cop plays by using the imagination strategy, where a graph $G'$ is imagined such that a winning strategy is known for the Cop on graph $G'$.
    The Cop therefore has a probe $B_1'$ on graph $G'$ which will lead to the Cop locating the Robber in a finite number of turns.
    This probe is copied to $G$ such that the Cop probes $B_1$ in the first turn.
    The Cop next receives some distance vector $\vec{D}(B_1,r)$ and copies this to graph $G'$.
    Again a second probe $B_2'$ on $G'$ is known, which is copied to the graph $G$ such that $B_2$ is probed.
    The game continues in this fashion.
    It is possible that a probe by the Cop in the imagined game is not legal in the real game and it is also possible that the distance received by the Cop in the real game does not exist in the imagined game.
    Both these problems need to be considered when using this strategy.
    
\section{Lower bounds for Cartesian products} \label{sec: lowbound}
    In this section we give two lower bounds for the Cartesian product of two general graphs.
    When considering Cartesian products, projections provide us with a way to move between the product and the individual graphs. 
    \begin{definition}[Projections \cite{Dimensions}]
        Let $S$ be a set of vertices in the Cartesian product $G \square H$.
        The projection of $S$ onto $G$ is the set of vertices $v \in V(G)$ for which there exists a vertex $(v,v') \in S$.
        Similarly, the projection of $S$ onto $H$ is the set of vertices $v' \in V(H)$ for which there exists a vertex $(v,v') \in S$.
    \end{definition}    
    
    Since the Cartesian product of two connected graphs of orders at least two always contains a 4-cycle, the following lower bound follows from \cref{lem: loc not 1}:
    \begin{proposition} \label{prop: no loc 1 product}
        Let $G$ and $H$ be any connected graphs of orders at least two. 
        Then $\zeta(G \square H) \geq 2$.
    \end{proposition}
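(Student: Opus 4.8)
The plan is to apply \cref{lem: loc not 1} directly, so the whole argument reduces to exhibiting a short cycle inside $G \square H$. First I would extract one edge from each factor: since $G$ is connected and has at least two vertices it contains an edge $uv$, and likewise $H$ contains an edge $u'v'$. These two edges are the only structural input the proof requires.

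Next I would assemble a $4$-cycle in the product from these edges. Consider the four vertices $(u,u')$, $(v,u')$, $(v,v')$, and $(u,v')$. I would verify adjacency around this quadruple straight from the definition of the Cartesian product, where consecutive pairs agree in one coordinate while the differing coordinate is an edge of the corresponding factor. Concretely, $(u,u') \sim (v,u')$ and $(v,v') \sim (u,v')$ hold because $d_G(u,v)=1$ with the $H$-coordinate fixed, while $(v,u') \sim (v,v')$ and $(u,v') \sim (u,u')$ hold because $d_H(u',v')=1$ with the $G$-coordinate fixed. Since $u \neq v$ and $u' \neq v'$, these four vertices are genuinely distinct, so the closed walk is an honest $4$-cycle rather than a degenerate one.

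Finally I would invoke \cref{lem: loc not 1}. Because $G \square H$ contains a cycle of length four, which is at most five, the lemma asserts that this cycle is a hideout and hence $\zeta(G \square H) \neq 1$. As the localization number is a positive integer (at least one cop is always required to play), this immediately upgrades to $\zeta(G \square H) \geq 2$. Every step here is a direct verification, so I do not anticipate any genuine obstacle; the only point meriting a moment's care is the distinctness of the four chosen vertices, which I have already noted follows from the factors having order at least two.
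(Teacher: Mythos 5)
Your proposal is correct and follows the same route as the paper, which states the result as an immediate consequence of \cref{lem: loc not 1} via the observation that $G \square H$ always contains a $4$-cycle when both factors are connected of order at least two. Your explicit construction of the cycle $(u,u'), (v,u'), (v,v'), (u,v')$ and the distinctness check simply spell out the detail the paper leaves implicit.
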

    
    The following lemma provides a link between the resolving set of the product $G \square H$ and the resolving set of $G$ or $H$:
    \begin{lemma}[\cite{Dimensions}] \label{lem: subset prod}
        Let $S \subseteq V(G \square H)$ for graphs $G$ and $H$.
        Then every pair of vertices in a fixed column of $G \square H$ is uniquely defined by their distance to the vertices in $S$ if and only if the projection of $S$ onto $H$ uniquely defines all vertices in $H$ by their distance to the projection.
        Similarly, every pair of vertices in a fixed row of $G \square H$ is uniquely defined by their distance to the vertices in $S$ if and only if the projection of $S$ onto $G$ uniquely defines all vertices in $G$ by their distance to the projection.
    \end{lemma}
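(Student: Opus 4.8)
The plan is to reduce everything to the additive distance formula for Cartesian products, namely that $d_{G \square H}\bigl((u,u'),(v,v')\bigr) = d_G(u,v) + d_H(u',v')$ for all $(u,u'),(v,v') \in V(G \square H)$. This decomposition is the engine of the whole argument, since it lets the contribution of a single probe split cleanly into a $G$-part and an $H$-part. I would state this formula first and use it as a standard fact about Cartesian products.

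Next I would fix a column, that is, a pair of vertices sharing the same $G$-coordinate, say $(u,a')$ and $(u,b')$ with $a' \neq b'$, and examine an arbitrary probe vertex $s = (x,y') \in S$. The distance formula gives $d\bigl((u,a'),s\bigr) = d_G(u,x) + d_H(a',y')$ and $d\bigl((u,b'),s\bigr) = d_G(u,x) + d_H(b',y')$. Because both vertices share the $G$-coordinate $u$, the term $d_G(u,x)$ is common to both and cancels when the two distances are compared. Hence $s$ distinguishes $(u,a')$ from $(u,b')$ if and only if $d_H(a',y') \neq d_H(b',y')$, that is, if and only if the $H$-coordinate $y'$ of $s$, which by definition belongs to the projection of $S$ onto $H$, distinguishes $a'$ from $b'$ in $H$.

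Both directions of the biconditional then follow from this per-probe equivalence. For the forward implication I would assume every pair of vertices in a column is resolved by $S$; realizing an arbitrary pair $a' \neq b'$ of $V(H)$ inside a single fixed column produces a probe $s$ whose $H$-coordinate separates $a'$ and $b'$, so the projection resolves $H$. Conversely, if the projection onto $H$ resolves $H$, then for any column-pair $(u,a'),(u,b')$ there is a projected vertex $y'$ with $d_H(a',y') \neq d_H(b',y')$, and any $s \in S$ carrying that $H$-coordinate separates the pair. The row statement is obtained verbatim by interchanging the roles of $G$ and $H$, so I would invoke symmetry rather than repeat the computation.

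I expect no genuine difficulty beyond bookkeeping. The one point that requires care is the quantifier structure of the biconditional: the cancellation of $d_G(u,x)$ is legitimate precisely because the two vertices lie in one column and so share their $G$-coordinate, and as a consequence the distinguishing criterion is independent of which column is chosen. I would also make sure the projection is read off correctly as the set of $H$-coordinates appearing in $S$, so that \emph{separated by a projected vertex} and \emph{separated by some} $s \in S$ genuinely coincide.
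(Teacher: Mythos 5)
Your proof is correct: the additive distance formula $d_{G \square H}\bigl((u,u'),(v,v')\bigr) = d_G(u,v) + d_H(u',v')$, the cancellation of the shared $G$-term $d_G(u,x)$ for a column pair, and the resulting per-probe equivalence yield both directions of the biconditional cleanly, with the row case following by symmetry. The paper itself offers no proof, citing the lemma from \cite{Dimensions}, and your argument is exactly the standard one given in that source, so there is nothing to flag.
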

    
    By making use of the imagination strategy we can show that the localization number of the product of graphs $G$ and $H$ is at least the maximum of the localization number of $G$ and $H$:
    \begin{theorem} \label{thm: loc(GH)>loc(G)}
        For any two graphs $G$ and $H$, the following equation holds: \\ $\zeta( G \square H) \geq \max\{\zeta(G), \zeta(H)\}$. 
    \end{theorem}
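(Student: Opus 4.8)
The plan is to prove the two inequalities $\zeta(G \square H) \ge \zeta(G)$ and $\zeta(G \square H) \ge \zeta(H)$ separately; since the construction is symmetric in the two factors, establishing the first suffices, and the stated maximum follows at once. To prove $\zeta(G \square H) \ge \zeta(G)$ I would fix $k = \zeta(G) - 1$ and exhibit a winning evasion strategy for the Robber on $G \square H$ against $k$ cops. This forces $\zeta(G \square H) > k$, that is, $\zeta(G\square H)\ge \zeta(G)$, and a winning Robber strategy on $G$ itself is available since $k < \zeta(G)$.

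First I would have the Robber commit at the outset to a single fixed vertex $y_0 \in V(H)$ and promise never to change the $H$-coordinate, so that the Robber's entire trajectory lives in the copy $G \times \{y_0\}$. With respect to adjacency this copy is an exact image of $G$: the vertices $(u,y_0)$ and $(v,y_0)$ are adjacent in $G\square H$ precisely when $u$ and $v$ are adjacent in $G$. Consequently every legal move (including staying put) in a game on $G$ corresponds to a legal move of the Robber inside this copy, which disposes of the usual worry that an imagined move might be illegal in the real game.

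The heart of the argument is the imagination strategy. The Robber imagines the localization game being played on $G$ with the same $k$ cops and runs a winning Robber strategy there. Whenever the Cop probes $B = \{(x_1,y_1),\dots,(x_k,y_k)\}$ in $G\square H$, the Robber uses the projected set $\{x_1,\dots,x_k\}$ as the probe in the imagined $G$-game, feeds the imagined distance vector $[d_G(\tilde r, x_1),\dots,d_G(\tilde r, x_k)]$ (which the Robber can compute directly, so no received distance ever fails to exist) into the winning strategy, and copies the resulting response back into $G\times\{y_0\}$. The key computation is the Cartesian distance identity $d_{G\square H}((u,y_0),(x_i,y_i)) = d_G(u,x_i) + d_H(y_0,y_i)$: the term $d_H(y_0,y_i)$ depends only on the probe and not on the Robber's position, so it is a common additive offset across all coordinates. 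Hence two positions $(\tilde r, y_0)$ and $(\tilde r', y_0)$ yield the same real distance vector to $B$ if and only if $\tilde r$ and $\tilde r'$ yield the same vector to $\{x_1,\dots,x_k\}$ in $G$.

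To finish I would argue by contradiction. Suppose the real Cop locates the Robber at some turn $t$, so that every Robber trajectory in $G\square H$ consistent with the observed real distance vectors $\vec d_1,\dots,\vec d_t$ agrees in position at turn $t$. Since the imagined strategy is winning, the imagined Cop has not located the imagined Robber, so there is an alternative imagined trajectory $\tilde r_0',\dots,\tilde r_t'$ consistent with every imagined distance vector yet with $\tilde r_t' \neq \tilde r_t$. Lifting it to $(\tilde r_0',y_0),\dots,(\tilde r_t',y_0)$ produces a legal trajectory in the copy whose real distance vectors, by the offset cancellation, coincide with $\vec d_1,\dots,\vec d_t$ and whose turn-$t$ position differs from the Robber's, contradicting location. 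I expect the main obstacle to be the careful bookkeeping of this last step: one must track consistency of entire trajectories across all previous turns rather than just the current one, match the precise definition of the Cop locating the Robber to the existence of this alternative lifted trajectory, and confirm that the two standard pitfalls of the imagination method, namely illegal imagined probes and nonexistent received distances, genuinely do not arise here.
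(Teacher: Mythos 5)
Your proposal is correct and takes essentially the same route as the paper's proof: a Robber imagination strategy on $G$, with the Robber confined to a single copy $G \times \{y_0\}$ of $G$ and the Cop's probes projected onto $G$, exactly as in the paper's argument with the fixed row $j$. The only cosmetic difference is that you verify the key separation property directly from the identity $d_{G\square H}\bigl((u,y_0),(x_i,y_i)\bigr) = d_G(u,x_i)+d_H(y_0,y_i)$ and spell out the trajectory-lifting bookkeeping, whereas the paper obtains the same fact by citing \cref{lem: subset prod}.
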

    \begin{proof}
        Consider the localization game played on the Cartesian product $G \square H$, where $G$ and $H$ are any two graphs.
        Say the Cop plays with $k=\zeta(G)-1$ cops and that the Robber plays by imagining the localization game on $G$.
        In the first turn, the Robber occupies some vertex $r_0$ in the imagined game.
        In the real game, the Robber chooses to occupy vertex $(r_0,j)$ for some row $j$ in $G \square H$.
        In the turns to follow, the Robber applies the following strategy:
        Say in turn $\alpha$ the Cop probes $B_{\alpha} = \{b_1, b_2, \ldots, b_k \}$.
        Let $S_{\alpha}$ be the projection of $B_{\alpha}$ onto $G$, such that $S_{\alpha}$ contains at most $k$ vertices.
        The Robber then imagines the Cop probes $S_{\alpha}$ on graph $G$, where the Robber is always able to avoid capture since $|S_{\alpha}| \leq k < \zeta(G)$.
        Therefore after probe $S_{\alpha}$ on $G$, there exists a vertex $r_{\alpha}$ where the Robber is safe.
        After probe $B_{\alpha}$ in the real game, the Robber will then be safe at vertex $(r_{\alpha}, j)$ by \cref{lem: subset prod}.
        The games continues in this fashion such that the Cop never wins and $\zeta(G \square H) > k = \zeta(G)-1$.
        In a similar fashion it can be shown that $\zeta(G \square H) > \zeta(H)-1$ and thus $\zeta( G \square H) \geq \max\{\zeta(G), \zeta(H)\}$.
    \end{proof}
    This lower bound is reached for $C_{2p+1} \square C_n$ if $p \geq 3$ and $4 \leq n \leq 6$.
    This is proven in \cref{sec: cyclicgrid} where we show that $\zeta(C_{2p+1}) = 1$ and $\zeta(C_n) = 2$, but $\zeta(C_{2p+1} \square C_n) = 2$.
    
\section{An upper bound for Cartesian products} \label{sec: upbound}
    This section gives an upper bound to the Cartesian product of two general graphs.
    After the Cop probes $k$ vertices there will be some vertices which are the same distance away from the probe. 
    \begin{definition}[Safe vertex]
        A vertex $v$ is called a safe vertex if it is not uniquely defined by probe $B$.
        In other words, there exists another vertex $w$ that is the same distance from $B$ as $v$.
    \end{definition}
    \begin{definition}[Safe set]
        A safe set is a set of safe vertices that are all the same distance from $B$.
        By definition, every safe vertex is part of a safe set.
    \end{definition}
    \begin{definition}[Robber set \cite{1copseager}]
        The robber set is defined as the safe set that the Robber has been localized to and is denoted by $O_{\alpha}$ in turn $\alpha$.
        Therefore if the robber set only contains one vertex, the Cop wins.
        If not, the Cop requires another probe and the Robber can move to any vertex in $N[O_{\alpha}]$.
    \end{definition}
    
    A strong form of a resolving set is needed to find an upper bound on the localization number of two graphs:
    \begin{definition}[Doubly resolving sets \cite{Dimensions}]
        Let $G \neq K_1$ be a graph.
        Two vertices $v_1, v_2 \in V(G)$ are doubly resolved by vertices $u_1,u_2 \in V(G)$ if $$d(v_1,u_1)-d(v_2,u_1) \neq d(v_1,u_2)-d(v_2,u_2).$$
        A set $W \subseteq V(G)$ doubly resolves $G$ and is a doubly resolving set, if every pair of distinct vertices $v_1, v_2 \in V(G)$ are doubly resolved by two vertices in $W$.
        The doubly resolving set with the smallest cardinality is denoted by $\psi(G)$.
    \end{definition}
    Even though $\psi(G)$ is defined in  \cite{Dimensions}, it is never named and hence we name it the \emph{doubly resolving number} of a graph $G$.
    Every graph $G$ with at least two vertices has a doubly resolving set and therefore it is well defined.
    Note that when calculating if some set $W \subseteq V(G)$ is a doubly resolving set, the vertex pairs inside $W$ need not be considered.
    To prove this, consider any two distinct vertices $w_1,w_2 \in W$.
    Clearly $d(w_1,w_1)-d(w_2,w_1) = -d(w_2,w_1)$ where $d(w_1,w_2) - d(w_2,w_2) = d(w_1,w_2)$ such that $w_1,w_2$ are doubly resolved by $W$.
    Also note that it is easy to check that every doubly resolving set is also a resolving set.
    Cáceres et al. proved that $2 \leq \psi(G) \leq m-1$ for any graph $G$ of order $m \geq 3$, where it was also shown that $\dim(G) \leq \psi(G)$.
    They also proved the following proposition:
    \begin{proposition}[\cite{Dimensions}] \label{prop: dim(GH) upbound}
        For all graphs $G$ and $H \neq K_1$, $\dim(G \square H) \leq \dim(G) + \psi(H)-1$.
    \end{proposition}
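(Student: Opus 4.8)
The plan is to build an explicit resolving set $S$ of $G \square H$ of size exactly $\dim(G) + \psi(H) - 1$. Fix a minimum resolving set $R = \{g_1, g_2, \ldots, g_{\dim(G)}\}$ of $G$ and a minimum doubly resolving set $W = \{h_1, h_2, \ldots, h_{\psi(H)}\}$ of $H$, and take
$$ S = \{(g_i, h_1) : 1 \leq i \leq \dim(G)\} \cup \{(g_1, h_j) : 2 \leq j \leq \psi(H)\}. $$
This set has cardinality $\dim(G) + \psi(H) - 1$. Its projection onto $G$ is $R$, a resolving set of $G$, and its projection onto $H$ is $W$, which is doubly resolving and hence resolving for $H$.

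Next I would split the analysis of a distinct pair of vertices into cases. If the two vertices lie in a common column or a common row, then \cref{lem: subset prod} applies directly: since the projection of $S$ onto $H$ resolves $H$, every pair in a fixed column is separated by $S$, and since the projection of $S$ onto $G$ resolves $G$, every pair in a fixed row is separated by $S$. This leaves only the diagonal case, a pair $(u, u')$ and $(v, v')$ with $u \neq v$ and $u' \neq v'$, which the lemma does not reach and which is the crux of the proof.

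For the diagonal case I would argue by contradiction using the product distance formula $d_{G \square H}((u, u'), (p, p')) = d_G(u, p) + d_H(u', p')$. If no vertex of $S$ separated $(u, u')$ from $(v, v')$, then $d_G(u, p) - d_G(v, p) = d_H(v', p') - d_H(u', p')$ would hold for every $(p, p') \in S$. Evaluating this on the probes $(g_i, h_1)$ shows that $d_G(u, g_i) - d_G(v, g_i)$ equals the single value $c := d_H(v', h_1) - d_H(u', h_1)$ for all $i$; in particular it holds for $i = 1$. Substituting $d_G(u, g_1) - d_G(v, g_1) = c$ into the identity for the probes $(g_1, h_j)$ with $j \geq 2$, and recalling the case $j = 1$, gives $d_H(u', h_j) - d_H(v', h_j) = -c$ for every $h_j \in W$. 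Thus this difference is constant over $W$, contradicting the fact that $W$ doubly resolves the distinct vertices $u'$ and $v'$. Hence $S$ separates every pair and is a resolving set, which proves the bound.

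The main obstacle is exactly this diagonal case. \cref{lem: subset prod} only controls pairs sharing a row or column, and an ordinary resolving set of $H$ in place of $W$ would not suffice, since it could leave a diagonal pair with a uniform offset $c$ that every probe fails to detect; the doubly resolving condition is precisely what rules out such a uniform offset. A secondary point requiring care is the layout of $S$: placing the extra $\psi(H) - 1$ probes all in the single column $g_1$ is what keeps the projection onto $H$ equal to the full doubly resolving set $W$, while holding the total cardinality down to $\dim(G) + \psi(H) - 1$.
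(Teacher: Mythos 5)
Your proof is correct and takes essentially the same approach as the paper: your cross-shaped set $S=\{(g_i,h_1)\}\cup\{(g_1,h_j)\}$ is exactly the probe set $X_1 = \{(b_1,t^i): t^i \in T\} \cup \{(b_1^i,t): b_1^i \in B_1\}$ that the paper builds in its proof of \cref{thm: general upbound}, and your constant-offset contradiction in the diagonal case is the same computation as Equations \cref{eq: break1} and \cref{eq: break2} there, with \cref{lem: subset prod} disposing of the shared-row and shared-column cases in both arguments. The proposition itself is quoted from \cite{Dimensions} without proof, and your construction reproduces that original argument, which the paper explicitly adapts.
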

    
    Since $\zeta(G) \leq \dim(G)$, this proposition provides an upper bound to $\zeta(G \square H)$.
    However, by making use of similar arguments, we can improve the bound:
    \begin{theorem} \label{thm: general upbound}
        Let $G$ and $H$ be any connected graphs.
        Then $\zeta(G \square H) \leq \zeta(G) + \psi(H) - 1$. 
    \end{theorem}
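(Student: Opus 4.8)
The plan is to let the Cop simulate a winning $\zeta(G)$-cop strategy on $G$ by means of the imagination strategy, while permanently reserving $\psi(H)$ of its probes to a single column so as to read off the Robber's row at every turn. Throughout I would use the Cartesian distance formula $d_{G \square H}((a,b),(p,q)) = d_G(a,p) + d_H(b,q)$ and fix a doubly resolving set $W = \{w_1,\ldots,w_{\psi(H)}\}$ of $H$. The count of probes is exactly right: $\psi(H)$ probes locate the row and the remaining $\zeta(G)-1$, together with the column information, reconstruct a full $\zeta(G)$-probe in $G$, for a total of $\zeta(G) + \psi(H) - 1 = k$.

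First I would handle row detection. Suppose the imagined $\zeta(G)$-strategy on $G$ prescribes probing the vertices $\{u_0,u_1,\ldots,u_{\zeta(G)-1}\}$. The Cop realizes $u_0$ as a column anchor, probing the $\psi(H)$ vertices with $G$-coordinate $u_0$ and $H$-coordinates $w_1,\ldots,w_{\psi(H)}$. With the Robber at $(a,b)$, these entries of the distance vector equal $d_G(a,u_0)+d_H(b,w_\ell)$, so their pairwise differences are $d_H(b,w_\ell)-d_H(b,w_{\ell'})$, independent of the unknown $a$. Since $W$ doubly resolves $H$, these differences pin down $b$ uniquely; subtracting $d_H(b,w_\ell)$ then recovers $d_G(a,u_0)$. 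Thus at every turn the Cop learns the exact row $b$ and the distance $d_G(a,u_0)$, regardless of where the Robber sits.

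Next I would simulate the game on $G$. The remaining $\zeta(G)-1$ probes are placed in a single row, say at $G$-coordinates $u_1,\ldots,u_{\zeta(G)-1}$ and $H$-coordinate $w_1$. Having determined $b$ from the previous step, the Cop reads each such entry $d_G(a,u_t)+d_H(b,w_1)$ and extracts $d_G(a,u_t)$. Hence a single turn in $G \square H$ yields precisely the vector $[d_G(a,u_0),d_G(a,u_1),\ldots,d_G(a,u_{\zeta(G)-1})]$ that probing $\{u_0,\ldots,u_{\zeta(G)-1}\}$ would return in a localization game on $G$ against a Robber at $a$. The Cop feeds this vector to the winning $\zeta(G)$-strategy on $G$, takes its next prescribed probe as the new $G$-coordinates, and repeats the placement of the previous two steps.

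Finally I would verify that the simulated play is legitimate and conclude. Every probe used is an honest vertex of $G \square H$ and every recovered value is a genuine $G$-distance, so neither pitfall of the imagination strategy arises. By the adjacency rule of the Cartesian product, a Robber move either sends the $G$-coordinate to a neighbour while fixing the row, or fixes the $G$-coordinate while changing the row; in both cases the induced sequence $a_0,a_1,\ldots$ is a valid Robber walk in $G$, so the imagined play is a genuine localization game on $G$. Projecting onto $G$ shows that the real robber set $O_\alpha$ lies inside (imagined robber set)$\,\times\,\{b\}$, so the moment the imagined strategy isolates $a$ the Cop has located the Robber in $G \square H$, giving $\zeta(G \square H) \le \zeta(G) + \psi(H) - 1$. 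The main obstacle is the row-detection step: the whole argument hinges on confining the doubly resolving set to one column so that the $H$-coordinate decouples from the unknown $G$-coordinate at every turn, which is exactly what upgrades the static construction of \cref{prop: dim(GH) upbound} to the dynamic game; some care is also needed in the final step to confirm the robber-set containment, so that a win in the imagined game is genuinely a win in $G \square H$.
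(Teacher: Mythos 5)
Your proposal is correct and follows essentially the same route as the paper: your probe set (the doubly resolving set $W$ placed in the column of one $G$-probe vertex, plus the remaining $\zeta(G)-1$ probe vertices placed in a single row) is exactly the paper's set $X_\alpha$, and your use of the doubly resolving property to decouple the $H$-coordinate from the unknown $G$-coordinate is the same key step. The only cosmetic difference is presentational: the paper phrases the argument as showing every safe set lies in a single row and projects to a safe set of the imagined game, whereas you phrase it as the Cop explicitly decoding the row $b$ and the full $G$-distance vector, which carries the same information.
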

    \begin{proof}
        It needs to be shown that the Cop can win on $G \square H$ using $\kappa$ cops, where $\kappa=\zeta(G) + \psi(H) - 1$.
        To this end, the Cop imagines the localization game on graph $G$.
        Let $T$ be a doubly resolving set of $H$ such that $\psi(H) = |T|$.
        Further, say the Cop probes $B_1$ in the first turn of the imagined game such that $|B_1| = \zeta(G)$.
        For a fixed $b_1 \in B_1$ and $t \in T$, define a set $X_1$ such that $X_1 \coloneqq \{(b_1,t^i) : t^i \in T \} \cup \{(b_1^i,t) : b_1^i \in B_1 \}$.
        Note that $|X_1|=\kappa$ and each entry of $X_1$ is a vertex in $G \square H$.
        In the first turn in the real game, the Cop probes $X_1$.
        It will now be shown that any safe set for this probe is contained in a single row of $G \square H$ and further that the projection of this safe set onto $G$ is a valid safe set in $G$.
        
        Consider two distinct vertices $(g,h)$ and $(g',h')$ of $G \square H$ where
        \begin{equation} \label{eq: gh-relat}
            \vec{D}\left( (g,h),X_1 \right) = \vec{D}\left( (g',h'),X_1 \right).
        \end{equation}
        Since $T$ is a doubly resolving set, by \cref{lem: subset prod} the projection of $X_1$ onto $H$ uniquely defines all vertices in $H$.
        Hence if $g=g'$, Equation \cref{eq: gh-relat} only holds if $h=h'$. 
        
        Now consider the case where $g \neq g'$ and assume $h \neq h'$.
        Since $T$ is a doubly resolving set of $H$, there exists two vertices $t_k,t_l \in T$ such that
        \begin{equation} \label{eq: T equation}
            d_H(h,t_k) - d_H(h',t_k) \neq d_H(h,t_l) - d_H(h',t_l).
        \end{equation}
        Equation \cref{eq: gh-relat} implies that 
        \begin{equation*}
            d_{G \square H} \left( (g,h), (x,x') \right) = d_{G \square H} \left( (g',h'), (x,x') \right)
        \end{equation*}
        for any $(x,x') \in X_1$.
        Thus 
        \begin{align*}
            d_{G \square H} \left( (g,h), (b_1,t_k) \right) &= d_{G \square H} \left( (g',h'), (b_1,t_k) \right) \text{ and } \\
            d_{G \square H} \left( (g,h), (b_1,t_l) \right) &= d_{G \square H} \left( (g',h'), (b_1,t_l) \right)
        \end{align*}
        such that 
        \begin{align}
            d_G(g,b_1)+d_H(h,t_k) &= d_G(g',b_1) + d_H(h',t_k) \text{ and } \label{eq: break1} \\
            d_G(g,b_1)+d_H(h,t_l) &= d_G(g',b_1) + d_H(h',t_l). \label{eq: break2} 
        \end{align}
        Equations \cref{eq: break1} and \cref{eq: break2} together imply
        \begin{equation*}
            d_H(h,t_k) - d_H(h',t_k) = d_H(h,t_l) - d_H(h',t_l),
        \end{equation*}
        contradicting Equation \cref{eq: T equation} and therefore Equation \cref{eq: gh-relat} only holds if $h=h'$.
        It follows that $d_G(g,b_1) = d_G(g',b_1)$ such that vertices $(g,h)$ and $(g',h')$ are in the same safe set in $G \square H$ if and only if vertices $g$ and $g'$ are in the same safe set in the imagination game.
        
        Say the Robber is localized to robber set $O_1$ in $G \square H$, where $Q_1$ is the projection of $O_1$ onto $G$.
        It has been shown that $O_1$ is contained in a single row and that $Q_1$ is a valid robber set in the imagination game.
        For robber set $Q_1$ in the imagination game, a probe $B_2$ is known such that the Cop wins in a finite number of turns.
        For a fixed $b_2 \in B_2$, let $X_2 \coloneqq \{(b_2,t^i): t^i \in T \} \cup \{(b_2^i,t) : b_2^i \in B_2 \}$ such that $|X_2| = \kappa$.
        As before two vertices $(a,b)$ and $(a',b')$ in $N[O_1]$ only belong to the same safe set in the real game if $b=b'$ and if $a$ and $a'$ belong to the same safe set in the imagination game.
        Say the robber is localized to $O_2$ in the real game and localized to $Q_2$ in the imagination game.
        Then $O_2$ will be contained in a single row and its projection onto $G$ will be equal to $Q_2$.
        Therefore the Cop can imagine the robber set $Q_2$ on $G$ such that $B_3$ is probed.
        The Cop continues in this fashion until the Robber is located.
        This is guaranteed because in some turn $s$ on graph $G$, the robber set $Q_s$ will only contain one vertex and therefore the robber set $O_s$ in the real game will also only contain one vertex. 
    \end{proof}
    \begin{corollary}
        Let $G$ and $H$ be any connected graphs.
        By restricting $\zeta(G)$ or $\psi(H)$, we get the following results:
        \begin{enumerate}
            \item If $\zeta(G)=1$, then $\zeta(H) \leq \zeta(G \square H) \leq \psi(H)$.
            \item If $\psi(H)=2$, then $\zeta(G) \leq \zeta(G \square H) \leq \zeta(G)+1$.
            \item If $\zeta(G)=1$ and $\psi(H)=2$, then $\zeta(G \square H) = 2$.
        \end{enumerate}
    \end{corollary}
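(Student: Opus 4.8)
The plan is to read off all three parts from the two bounds already established---\cref{thm: general upbound}, which gives $\zeta(G \square H) \leq \zeta(G) + \psi(H) - 1$, and \cref{thm: loc(GH)>loc(G)}, which gives $\zeta(G \square H) \geq \max\{\zeta(G), \zeta(H)\}$---together with the baseline bound of \cref{prop: no loc 1 product}. Each item is then a matter of substituting the hypothesis into one of these inequalities.

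For part (1) I would set $\zeta(G) = 1$ in the upper bound, so that $\zeta(G \square H) \leq 1 + \psi(H) - 1 = \psi(H)$, while the lower bound $\zeta(G \square H) \geq \zeta(H)$ follows at once by selecting the $\zeta(H)$ term of the maximum. Part (2) is entirely symmetric: substituting $\psi(H) = 2$ into the upper bound gives $\zeta(G \square H) \leq \zeta(G) + 1$, and the lower bound $\zeta(G \square H) \geq \zeta(G)$ is again the relevant term of the maximum.

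For part (3) I would feed both hypotheses into part (1) to obtain $\zeta(G \square H) \leq \psi(H) = 2$, so that only the matching lower bound $\zeta(G \square H) \geq 2$ remains. This is the single step that is not a pure substitution, and where I expect the only genuine care to be needed: I must verify that the hypotheses of \cref{prop: no loc 1 product} are met, namely that both $G$ and $H$ are connected of order at least two. The assumption $\zeta(G) = 1$ excludes $G = K_1$, since a one-vertex graph has $\dim = 0$ and hence $\zeta = 0$ by \cref{eq: zeta-dim bound}, while $\psi(H) = 2$ is defined only for graphs of order at least two. Both factors are therefore nontrivial, so \cref{prop: no loc 1 product} yields $\zeta(G \square H) \geq 2$, and combining this with the upper bound gives $\zeta(G \square H) = 2$.
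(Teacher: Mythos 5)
Your proposal is correct and is precisely the derivation the paper intends: the corollary is stated there without proof as an immediate consequence of \cref{thm: loc(GH)>loc(G)}, \cref{thm: general upbound}, and (for part (3), where $\zeta(G\square H)\geq\zeta(H)$ alone would not suffice, e.g.\ $H$ a path has $\psi(H)=2$ but $\zeta(H)=1$) \cref{prop: no loc 1 product}, which is exactly the combination you use. Your explicit check in part (3) that both factors have order at least two so that \cref{prop: no loc 1 product} applies is a detail the paper leaves implicit, and spelling it out is if anything more careful than the source.
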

    
\section{Products of cycles} \label{sec: cyclicgrid}
    Let $C_m \square C_n$ be the Cartesian product of two cycles of order $m$ and $n$ respectively.
    
    \begin{definition}[Second difference]
        For probe $B$, vertex $v$ and distance vector $\vec{D}(B,v) = [a,b]$, we define the second difference $DD$ as $DD(B,v) = b-a$.
    \end{definition}
    Clearly, if $DD(B,x) \neq DD(B,y)$, then $\vec{D}(B,x) \neq \vec{D}(B,y)$ for any two vertices $x,y$.
    For each second difference that is not unique to a single vertex, there exists a set of vertices where the Robber is potentially safe.
    This set will be called a safe house.
    \begin{definition}[Safe house]
        For a graph $G$ with probe $B$, a safe house $S_h$  is the set of all vertices $v \in V(G)$ such that $DD(B,v)=h$.
        Note that safe sets are confined to a specific safe house.
    \end{definition}
    \begin{definition}[Cop house]
        Let $G$ be a graph where the Cop probes $B_{\alpha}$ in turn $\alpha$.
        A cop house is a subset of $V(G)$ that contains only vertices from different safe sets.
    \end{definition}
    A cop house is therefore ``locally unique": if the Robber is restricted to movement in a cop house in turn $\alpha$, the Cop wins immediately.
    Note that a cop house may contain safe vertices, but all vertices in a cop house belong to different safe sets.
    
    \begin{definition}[Diagonal safe pair]
        A diagonal safe pair is a safe set that contains two safe vertices that can be written as $\{v_{a,b}, v_{a+1,b+1}\}$ (positive diagonal) or $\{v_{a,b}, v_{a+1,b-1}\}$ (negative diagonal) for integers $a \text{ and } b$.
    \end{definition}
    \begin{definition}[Horizontal safe pair]
        A horizontal safe pair $S_d^h$ is a safe set that contains two safe vertices a distance of $d$ apart that can be written as $\{v_{a,b}, v_{a+d,b} \}$.
    \end{definition}
    \begin{definition}[Vertical safe pair]
        A vertical safe pair $S_d^v$ is a safe set that contains two safe vertices a distance of $d$ apart that can be written as $\{v_{a,b}, v_{a,b+d} \}$.
    \end{definition}
    
    The main result in this section shows that $\zeta(C_m \square C_n) = 2$ for most cases of $m$ and $n$.
    \begin{theorem} \label{thm: cyclic grid loc}
        Let $C_m \square C_n$ be a product of cycles with $m,n$ integers such that $m\geq n \geq 3$.
        If $m=n=3$ or if $m$ is even while $n=4$, then $\zeta(C_m \square C_n)=3$.
        Otherwise, $\zeta(C_m \square C_n)=2$.
    \end{theorem}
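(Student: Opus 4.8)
Throughout, the lower bound $\zeta(C_m \square C_n) \ge 2$ is free: $C_m$ and $C_n$ are connected of order at least three, so \cref{prop: no loc 1 product} applies. The proof therefore reduces to two tasks --- producing a two-cop winning strategy in the generic case, and, for the two exceptional families, both defeating two cops and exhibiting a three-cop strategy. I would organise everything around the second difference $DD$: since on a torus the distance from $v_{i,j}$ to a vertex in grid position $(k,l)$ is $d_{C_m}(i,k)+d_{C_n}(j,l)$, a probe $B=\{b_1,b_2\}$ turns $DD(B,v_{i,j})$ into the natural coordinate for tracking the robber.

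The first step toward $\zeta \le 2$ is a structural lemma: for an appropriately placed probe $B$, every safe set has size at most two and is a diagonal, horizontal, or vertical safe pair. I would prove this by computing $\vec{D}(v_{i,j},B)$ explicitly from the cycle-distance formula $d_{C_m}(i,k)=\min(|i-k|,\,m-|i-k|)$, so that a coincidence $\vec{D}\big((g,h),B\big)=\vec{D}\big((g',h'),B\big)$ forces $(g',h')$ to be the image of $(g,h)$ under a reflection in one or both cyclic coordinates; sorting these reflections by the parities of the coordinate differences yields exactly the three catalogued pair types. Having confined the robber to a safe pair $O_1$, the plan is a turn-by-turn reduction: I choose each subsequent probe so that, even after the robber moves throughout $N[O_\alpha]$, the resulting robber set is strictly smaller in a potential adapted to the pair type --- the gap $d$ for $S_d^h$ and $S_d^v$, or the disappearance of a diagonal pair --- invoking \cref{lem: subset prod} to pass between the torus and its factors. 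Iterating drives the robber set to a single vertex.

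The hard part is the cyclic wraparound. Because distances in $C_n$ saturate at $\lfloor n/2\rfloor$, a probe that would cleanly separate two candidates on a long cycle may fail on a short one, and the two independent reflections, one per coordinate, can conspire to regenerate a safe pair on every turn. The core of the case analysis is to show that, away from the exceptional sizes, the cop can always place a probe breaking every reflection and translation symmetry of the current safe set, so that the potential must drop; it is precisely here that excluding $m=n=3$ and the pairs with $m$ even, $n=4$ does the work, since in every remaining case at least one coordinate has enough room to desynchronise the two reflections. I expect this symmetry-breaking step, handled separately for horizontal, vertical, and diagonal pairs, to be the most delicate and calculation-heavy portion of the argument.

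For the exceptional families the argument runs the other way. For $C_3 \square C_3$ and for $C_m \square C_4$ with $m$ even, I would show that against any two probes the robber can keep two candidate vertices sharing a distance vector that can both move so as to remain ambiguous --- a hideout in the sense of \cref{lem: loc not 1} --- giving $\zeta \ge 3$; the mechanism is that in these sizes every two-vertex probe is fixed setwise by a nontrivial automorphism of the torus (forced by the extra reflection symmetry of the even factor $C_4$, or by the smallness and high symmetry of $C_3 \square C_3$), whose orbits sustain a safe pair indefinitely. The matching upper bound $\zeta \le 3$ for $C_3 \square C_3$ is immediate from \cref{thm: general upbound}, since $\zeta(C_3)=\psi(C_3)=2$ give $\zeta(C_3 \square C_3)\le 2+2-1=3$; for $C_m \square C_4$ with $m$ even I would instead augment the two-cop scheme above with a third cop whose sole role each turn is to destroy the reflection that the length-four cycle keeps regenerating, completing the proof.
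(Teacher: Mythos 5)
Your plan contains two genuine gaps, one on each side of the bound. First, the structural lemma your two-cop strategy rests on --- ``for an appropriately placed probe $B$, every safe set has size at most two'' --- is false on a torus. If the two probed vertices share no row or column, the two taxicab spheres have parallel diagonal sides, so whole diagonal segments are ambiguous: for $b_1=v_{0,0}$, $b_2=v_{1,2}$ on a large torus, every vertex $v_{t,s}$ with $t,s\geq 2$ and $t+s$ fixed (and small enough that both distances are realised in the same quadrant) has distance vector $[t+s,\,t+s-3]$. If instead the probes share a column --- the placement the paper actually uses, $B_1=\{v_{p,2q-1},v_{p,q-1}\}$ --- then both coordinate reflections survive and the safe sets are the \emph{four}-element sets $\{v_{i,j},v_{2p-i,j},v_{i,2q-2-j},v_{2p-i,2q-2-j}\}$ of \cref{lem: O extra}, not pairs. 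This is why the paper needs a second, translated probe together with the cop-house $R_1$, $R_2$ machinery to cut the $4$-sets down to pairs, and then explicit third and fourth probes (with separate tables for small $p$ and $q$, and an imagination-strategy transfer from $C_{2p+1}\square C_6$ for $C_{2p}\square C_6$). Your ``potential drops every turn'' iteration is precisely this case analysis, and you defer all of it; as stated it starts from a false premise. (By contrast, your observation that many generic cases follow from \cref{thm: general upbound} is in line with the paper, which disposes of $m\geq 7$, $n$ odd, and $n\geq 8$ exactly this way.)

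Second, your lower-bound mechanism for $C_{2p}\square C_4$ fails for exactly the probes that matter. When the probe projects onto two \emph{adjacent} vertices of $C_4$ (the paper's Type 2) with the probes in different columns, there is no nontrivial automorphism fixing $b_1$ and $b_2$ pointwise: the stabilizing symmetries swap $b_1\leftrightarrow b_2$, and a swapping automorphism $\phi$ only gives $d(v,b_1)=d(\phi v,b_2)$, i.e.\ equality of \emph{transposed} vectors --- the Cop receives ordered vectors, so no safe pair results. Indeed Type-2 probes resolve every column by \cref{lem: subset prod}, and the paper must instead combine $\dim(C_{2p}\square C_4)=4$ (\cref{lem: dim(CxC)}) with the bipartite parity lemma (\cref{lem: Bipart dist}) to locate diagonal safe pairs in adjacent columns. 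Moreover, even where a stabilizing automorphism exists, ``orbits sustain a safe pair indefinitely'' is an assertion about a single turn; an evasion strategy must show the Robber can \emph{move} from the current ambiguous pair into an ambiguous pair of the next, adversarially chosen probe, which is the content of \cref{strat: diag pair,strat: vert pair} (and, for $C_3\square C_3$, of the paper's pigeonhole count on $N[\{u,v\}]$ in the diameter-$2$ graph). Finally, your three-cop upper bound for even $m$, $n=4$ (``a third cop destroys the reflection'') is an unproven sketch, and your fallback \cref{thm: general upbound} gives only $\zeta\leq\zeta(C_m)+\psi(C_4)-1=4$ when $m\in\{4,6\}$; the paper gets $\zeta\leq 3$ for free from \cref{cor: Cgrid<4} via \cref{prop: products upbound}. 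Your $C_3\square C_3$ bound $\zeta\leq\zeta(C_3)+\psi(C_3)-1=3$ is correct, and is the one point where your route is a clean alternative to the paper's.
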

    From \cref{thm: general upbound} we have the following result for cycles:
    \begin{equation} \label{eq: cycle upbound}
        \zeta(C_m \square C_n) \leq \zeta(C_m) + \psi(C_n) - 1.
    \end{equation}
    From \cite{1copCarraher} and \cite{1copseager} it follows that
    \begin{equation*}
        \zeta(C_m) = 
        \begin{cases}
            1 & \text{for } m \geq 7 \\
            2 & \text{for } m \leq 6.
        \end{cases}
    \end{equation*}
    Further, Cáceres et al. \cite{Dimensions} showed that
    \begin{equation*}
        \psi(C_n) = 
        \begin{cases}
            2 & \text{for odd $n$} \\
            3 & \text{for even $n$}
        \end{cases}
    \end{equation*}
    such that $\zeta(C_m \square C_n) = 2$ for $m \geq 7$ and $n$ odd.
    The value of $\zeta(C_m \square C_n)$ for $m \leq 6$ or when $n$ is even will be determined in three separate cases: the product of two odd cycles, an even and an odd cycle and lastly two even cycles.
    
    \subsection{Odd by Odd}
        First consider the localization number of $C_m \square C_n$ where $m$ and $n$ are odd and $m \geq n$.
        Since $n$ is odd, it is known that $\zeta(C_m \square C_n) = 2$ when $m \geq 7$ and therefore only two cases for $m$ are considered here: $m=3$ and $m=5$.
        For $m = n = 3$, we prove the following result:
        \begin{proposition} \label{prop: C3xC3}
            Let $C_3 \square C_3$ be a product of cycles.
            Then $\zeta(C_3\square C_3) = 3$.
        \end{proposition}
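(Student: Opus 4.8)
The upper bound $\zeta(C_3 \square C_3) \le 3$ is immediate from \cref{cor: Cgrid<4}, so the real content is the lower bound $\zeta(C_3 \square C_3) \ge 3$: I would exhibit an avoidance strategy for the Robber against two cops. The starting observation is that, since $C_3 = K_3$, any two distinct vertices sharing a row or a column are adjacent, so $d(v_{i,j},v_{i',j'})$ equals the number of coordinates in which the two vertices differ and lies in $\{0,1,2\}$, and the closed neighbourhood $N[v_{i,j}]$ is the ``plus'' consisting of the whole row and whole column through $v_{i,j}$, of size $5$.

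The heart of the argument is a classification of the safe sets produced by an arbitrary two-vertex probe $B=\{b_1,b_2\}$. Using the automorphisms of $C_3 \square C_3$ — arbitrary permutations of the rows, arbitrary permutations of the columns, and the transposition interchanging the two factors — one reduces to two cases: $b_1,b_2$ collinear (say $b_1=v_{0,0}$, $b_2=v_{0,1}$) and $b_1,b_2$ differing in both coordinates ($b_1=v_{0,0}$, $b_2=v_{1,1}$). The plan is to tabulate the distance vector of all nine vertices in each case and check that, in both, exactly three vertices are uniquely resolved while the remaining six split into three \emph{pairwise-disjoint} safe sets, each of size two (three vertical safe pairs in the collinear case, three diagonal safe pairs in the other). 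A degenerate probe with $b_1=b_2$ collapses to a single cop and is dispatched by pigeonhole on the three distance classes.

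With this structure in hand, the Robber maintains the invariant that the robber set $O_\alpha$ is always one of these safe pairs, so $|O_\alpha|=2$. The transition step is where the main obstacle lies: I must show that, whatever probe the Cop plays next, the reachable region $N[O_\alpha]$ fully contains at least one safe set of that probe. The key quantitative input is that every safe pair $O=\{u,v\}$ satisfies $|N[O]|\ge 7$, equivalently $|N[u]\cap N[v]|\le 3$, which I would verify on the vertical, horizontal, and diagonal representatives (the diagonal case even giving $|N[O]|=8$). Consequently $N[O_\alpha]$ omits at most two vertices of $C_3 \square C_3$; since the three safe sets of the next probe are pairwise disjoint, two omitted vertices cannot meet all three, so some safe set lies entirely inside $N[O_\alpha]$. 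The Robber moves into that pair, setting $O_{\alpha+1}$ equal to it and preserving the invariant.

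Since the initial robber set is all of $V(C_3 \square C_3)$, which contains every safe set, the invariant is established on the first turn and maintained forever, so $|O_\alpha|\ge 2$ for all $\alpha$ and two cops never locate the Robber, giving $\zeta(C_3 \square C_3)\ge 3$. The delicate point to get right is that the counting is tight: the bound $|N[O]|\ge 7$ leaves room to miss exactly two vertices, and the disjointness of the three safe pairs is precisely what forces one of them to survive inside $N[O_\alpha]$. I would therefore isolate the safe-set classification as a short lemma and make the $|N[O]|\ge 7$ estimate explicit before invoking the disjointness count.
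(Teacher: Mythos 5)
Your proposal is correct, but it takes a more structural route than the paper. The paper's own lower-bound argument is a pure pigeonhole count: since $\operatorname{diam}(C_3 \square C_3)=2$, any unprobed vertex has distance vector in $\{1,2\}^2$, so a two-vertex probe admits at most four distinct vectors among the seven unprobed vertices, forcing a collision on the first turn; thereafter the paper bounds $|N[\{u,v\}]|$ from below and pigeonholes again, never classifying what the safe sets actually look like. You instead classify, up to the automorphisms of $K_3 \square K_3$, the two nondegenerate probe types and show each yields exactly three uniquely resolved vertices and three pairwise disjoint safe pairs, then close the loop with the covering count: $|N[O_\alpha]|\geq 7$ omits at most two of the nine vertices, and two omitted vertices cannot meet three disjoint pairs, so a full pair survives inside $N[O_\alpha]$. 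What the paper's approach buys is brevity and no case analysis; what yours buys is explicitness, and notably your sharper estimate $|N[\{u,v\}]|\geq 7$ is exactly the count the argument needs: the paper states only $|N[\{u,v\}]|\geq 5$, and if both probed vertices lie in $N[\{u,v\}]$ that leaves just three unprobed vertices against four possible distance vectors, so the pigeonhole step as literally written in the paper is too weak, whereas the true bound of $7$ (which you verify, distinguishing the collinear case with $|N[O]|=7$ from the diagonal case with $|N[O]|=8$) repairs it. Two small points to tidy in your write-up: in the degenerate probe case $b_1=b_2$ there are only two distance classes ($1$ and $2$) among unprobed vertices, not three, though pigeonhole on at least six unprobed vertices of $N[O_\alpha]$ still succeeds; and after such a turn the robber set is a $2$-subset of a size-four safe set rather than one of your classified pairs, so state the invariant as ``$O_\alpha$ contains two vertices in a common safe set,'' which is all the transition step uses since any two distinct vertices satisfy the neighbourhood bound.
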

        \begin{proof}
            Note that $\zeta(C_3) = \psi(C_3) = 2$ and therefore $\zeta(C_3\square C_3) \leq 3$ by Equation \cref{eq: cycle upbound}. 
            It follows that we only need to show that there exists a winning strategy for the Robber if only two cops are used.
            To this end, say the Cop probes $B_1=\{b_1,b_2 \}$ in the first turn and let
            \begin{equation*}
                Z=V(C_3 \square C_3) \setminus B_1
            \end{equation*}
            be the vertices not probed by the Cop.
            Since $\operatorname{diam}(C_3 \square C_3)=2$, the distance vector $\vec{D}(B_1,z)$ for $z \in Z$ may be one of four unique distance vectors.
            Since $|Z|=7$, there exists safe vertices and the Robber can avoid capture in the first turn.
            Say $u$ and $v$ are two vertices in the same safe set and the Robber is at one of these two vertices.
            Then $|N[\{u,v\}]| \geq 5$ and again by the pigeonhole principle, at least two vertices in $N[\{u,v\}]$ are not uniquely defined by $B_2$.
            Thus, at any turn, there are at least two vertices where the Robber is safe, irrespective of the Cop's probe, and therefore $\zeta(C_3 \square C_3) \geq 3$.
        \end{proof}
        Now consider the case when $m=n=5$.
        \begin{proposition} \label{prop: loc(C5xC5)}
            Let $C_5 \square C_5$ be the product of two cycles.
            Then $\zeta(C_5 \square C_5)=2$.
        \end{proposition}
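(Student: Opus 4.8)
The lower bound $\zeta(C_5\square C_5)\ge 2$ is immediate from \cref{prop: no loc 1 product}, and \cref{cor: Cgrid<4} already gives $\zeta(C_5\square C_5)\le 3$, so the entire task is to exhibit a winning strategy for two cops. My plan is to first probe so as to pin the Robber's column exactly, and then run a short sequence of adaptive, deliberately asymmetric probes that drives the safe set containing the Robber down to an \emph{adjacent} pair, which can then be resolved in a single probe.

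For Step~1 (pinning a column) I would open with $B_1=\{v_{0,0},v_{0,2}\}$. Since both probe vertices lie in a common row, the first coordinate cancels in the second difference, so $DD(B_1,v_{i,j})=d_{C_5}(j,2)-d_{C_5}(j,0)$ depends only on $j$; because $\{0,2\}$ is a doubly resolving set of $C_5$ these five values are distinct, so every safe house is a single column and the Robber's column $c$ becomes known. Within column $c$ the distance to $B_1$ determines $d_{C_5}(i,0)\in\{0,1,2\}$, so the only safe sets are the singleton $\{v_{0,c}\}$ (a win), the adjacent pair $\{v_{2,c},v_{3,c}\}$, and the distance-two pair $\{v_{1,c},v_{4,c}\}$.

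The endgame (Step~2) is the computation I would isolate: the closed neighbourhood $N[\{v_{a,b},v_{a+1,b}\}]$ of an adjacent pair has only eight vertices, and the single probe $\{v_{a,b+1},v_{a+1,b+2}\}$ resolves it, that is, listing the distance vectors of all eight vertices shows they are pairwise distinct. Hence, once the Robber has been confined to an adjacent pair, one further probe locates it. It then remains (Step~3) to reduce the distance-two pair $\{v_{1,c},v_{4,c}\}$ to an adjacent one. Tracking the \emph{type} of the safe pair in the horizontal/vertical/diagonal terminology above, I would use the probe $\{v_{1,c},v_{4,c+2}\}$ on the nine vertices of $N[\{v_{1,c},v_{4,c}\}]$; a direct check shows this splits the set into singletons and diagonal pairs. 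A subsequent, suitably placed probe then splits the (eight-vertex) neighbourhood of a diagonal pair into singletons and a single adjacent pair, after which Step~2 finishes. Since each probe strictly advances the pair along the chain $\text{distance-two}\to\text{diagonal}\to\text{adjacent}\to\text{located}$, the Cop wins in a bounded number of turns against every response of the Robber.

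The main obstacle is Step~3. On the torus there is no boundary to exploit, and any probe whose two cops share a row or column is invariant under the reflection fixing that line, so after moving the Robber can re-synchronise a fresh symmetric pair; indeed one checks that the distance-two and the diagonal neighbourhoods cannot be shattered by a single two-cop probe. The crux is therefore to verify that the asymmetric reduction probes never regress the pair type and always leave only strictly simpler safe pairs, so that the unique terminal configuration is the shatterable adjacent pair of Step~2. This amounts to a finite but careful case analysis over the possible positions of the pair and the Robber's intervening moves.
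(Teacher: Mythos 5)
Your strategy is correct and completable, but it follows a genuinely different route from the paper. The paper's proof also opens with both cops in a common column fiber whose projection $\{2,4\}$ doubly resolves $C_5$ (so, exactly as in your Step~1, the robber set becomes a pair symmetric about the probed column, confined to one known row), but it then finishes in a \emph{single} further probe: $B_2=\{v_{4,j+1},v_{2,j+1}\}$ is the first probe rotated by $90$ degrees and translated so that the resulting cop house --- three consecutive rows in which all vertices belong to distinct safe sets --- contains $N[O_1]$ entirely. Two probes total, with no case analysis on pair types. Your alternative drives the robber set down the chain distance-two $\to$ diagonal $\to$ adjacent $\to$ located, and your two explicit probes do check out: $\{v_{a,b+1},v_{a+1,b+2}\}$ produces eight pairwise distinct distance vectors on $N[\{v_{a,b},v_{a+1,b}\}]$, and $\{v_{1,c},v_{4,c+2}\}$ splits the nine vertices of $N[\{v_{1,c},v_{4,c}\}]$ into five singletons and the two positive diagonal pairs $\{v_{0,c},v_{1,c+1}\}$ and $\{v_{1,c-1},v_{2,c}\}$. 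What the paper's route buys is brevity and a reusable mechanism (the cop house argument recurs throughout \cref{sec: cyclicgrid}); what yours buys is a purely local, mechanically verifiable endgame that never consults the global safe-house picture after the first probe --- at the cost of two extra turns, which is irrelevant to $\zeta$.

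The one place your writeup is incomplete is the diagonal step, which you yourself flag as the crux: you never exhibit the probe, and your supporting remark that ``the diagonal neighbourhoods cannot be shattered by a single two-cop probe'' is in fact false --- fortunately in your favour. For $O_3=\{v_{x,y},v_{x+1,y+1}\}$, probing $B=\{v_{x+2,y+1},v_{x+1,y+3}\}$ shatters all of $N[O_3]$ into singletons: the vertices $v_{x,y}$, $v_{x+1,y+1}$, $v_{x+1,y}$, $v_{x,y+1}$, $v_{x-1,y}$, $v_{x,y-1}$, $v_{x+2,y+1}$, $v_{x+1,y+2}$ receive the vectors $[3,3]$, $[1,2]$, $[2,2]$, $[2,3]$, $[3,4]$, $[4,2]$, $[0,3]$, $[2,1]$ respectively, which are pairwise distinct (the negative diagonal follows by reflection). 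So the Cop wins \emph{immediately} from a diagonal pair, your intermediate reduction ``diagonal $\to$ adjacent'' is unnecessary, and the whole strategy terminates in at most three probes. With this probe supplied, the argument is complete. One terminological slip worth fixing: under the paper's convention $v_{0,0}$ and $v_{0,2}$ share a \emph{column}, so what you call the Robber's column $c$ is the paper's row $c$; this is harmless but should be made consistent with the labelling fixed in Section~2.
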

        \begin{proof}
            The Cop probes $B_1 = \{v_{2,4}, v_{2,2} \}$ in the first turn.
            For any vertex $v_{i,j}$, the distance vector $\vec{D}(B_1, v_{i,j})$ is given in \cref{fig: OxO B1}.
            Safe houses are indicated with the same colour and vertices that belong to the same safe set have the same shape and colour.
            The probed vertices are indicated as squares and empty vertices do not form part of a safe set.
            The distance from a vertex to $B_1$ is indicated above the vertex.
            From the figure it can be seen that all safe sets have the form $\{v_{i,j}, v_{4-i,j} \}$ for $i,j = 0,1,2,3,4$.
            Also, notice the presence of a cop house in columns $0$ to $2$.
            \begin{figure}[h]
                \centering
                \begin{tikzpicture}[scale=1.2]
                    \draw[fill=black!20] (0.55,-0.3) rectangle (3.4,4.65);
                    \node[rotate=90] at (0.3,2) {Cop house};
                
                    \node[draw, regular polygon, regular polygon sides=5, minimum size=16, fill=cyan, label=north: {\footnotesize $[3,4]$}] at (1,0) {};
                    \node[draw, regular polygon, regular polygon sides=5, minimum size=16, fill=cyan, label=north: {\footnotesize $[3,4]$}] at (5,0) {};
                    \node[draw, star, star points=6, fill=cyan, label=north: {\footnotesize $[2,3]$}] at (2,0) {};
                    \node[draw, star, star points=6, fill=cyan, label=north: {\footnotesize $[2,3]$}] at (4,0) {};
                    \node[draw, minimum size=16, circle, fill=white, label=north: {\footnotesize $[1,2]$}] at (3,0) {};
                    
                    \node[draw, regular polygon, regular polygon sides=5, minimum size=16, fill=black, label=north: {\footnotesize $[4,3]$}] at (1,1) {};
                    \node[draw, regular polygon, regular polygon sides=5, minimum size=16, fill=black, label=north: {\footnotesize $[4,3]$}] at (5,1) {};
                    \node[draw, star, star points=6, fill=black, label=north: {\footnotesize $[3,2]$}] at (2,1) {};
                    \node[draw, star, star points=6, fill=black, label=north: {\footnotesize $[3,2]$}] at (4,1) {};
                    \node[draw, minimum size=16, circle, fill=white, label=north: {\footnotesize $[2,1]$}] at (3,1) {};
                    
                    \node[draw, regular polygon, regular polygon sides=5, minimum size=16, fill=olive, label=north: {\footnotesize $[4,2]$}] at (1,2) {};
                    \node[draw, regular polygon, regular polygon sides=5, minimum size=16, fill=olive, label=north: {\footnotesize $[4,2]$}] at (5,2) {};
                    \node[draw, star, star points=6, fill=olive, label=north: {\footnotesize $[3,1]$}] at (2,2) {};
                    \node[draw, star, star points=6, fill=olive, label=north: {\footnotesize $[3,1]$}] at (4,2) {};
                    \node[draw, minimum size=16, fill=white, label=north: {\footnotesize $[2,0]$}] at (3,2) {};
                    
                    \node[draw, regular polygon, regular polygon sides=5, minimum size=16, fill=orange, label=north: {\footnotesize $[3,3]$}] at (1,3) {};
                    \node[draw, regular polygon, regular polygon sides=5, minimum size=16, fill=orange, label=north: {\footnotesize $[3,3]$}] at (5,3) {};
                    \node[draw, star, star points=6, fill=orange, label=north: {\footnotesize $[2,2]$}] at (2,3) {};
                    \node[draw, star, star points=6, fill=orange, label=north: {\footnotesize $[2,2]$}] at (4,3) {};
                    \node[draw, minimum size=16, circle, fill=white, label=north: {\footnotesize $[1,1]$}] at (3,3) {};
                    
                    \node[draw, regular polygon, regular polygon sides=5, minimum size=16, fill=purple, label=north: {\footnotesize $[2,4]$}] at (1,4) {};
                    \node[draw, regular polygon, regular polygon sides=5, minimum size=16, fill=purple, label=north: {\footnotesize $[2,4]$}] at (5,4) {};
                    \node[draw, star, star points=6, fill=purple, label=north: {\footnotesize $[1,3]$}] at (2,4) {};
                    \node[draw, star, star points=6, fill=purple, label=north: {\footnotesize $[1,3]$}] at (4,4) {};
                    \node[draw, minimum size=16, fill=white, label=north: {\footnotesize $[0,2]$}] at (3,4) {};
                \end{tikzpicture}
                \caption{The product $C_5 \square C_5$ where the safe sets, safe vertices and safe houses for probe $B_1$ are indicated.
                Safe houses are indicated with the same colour and vertices that belong to the same safe set have the same shape and colour.
                The probed vertices are indicated as squares and empty vertices do not form part of a safe set.
                The distance from a vertex to $B_1$ is indicated above the vertex.}
                \label{fig: OxO B1}
            \end{figure}
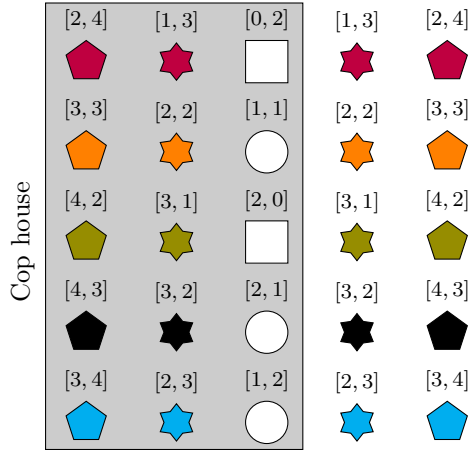
            
            If the Robber was at a vertex in column $2$, the Cop wins immediately.
            If not, the Robber is localized to the robber set $O_1 = \{v_{i,j}, v_{4-i,j} \}$ such that $N[O_1]$ is contained in rows $j-1$, $j$ and $j+1$.
            For the second probe, the Cop probes $B_2 = \{v_{4,j+1}, v_{2,j+1} \}$ such that $B_2$ is a translation of $B_1$, rotated by 90 degrees.
            Thus probe $B_2$ creates a cop house in rows $j-1$ to $j+1$.
            Since $N[O_1]$ is contained in these rows, the Cop wins.
        \end{proof}
        
        The proof for \cref{prop: loc(C5xC5)} is modified slightly for $C_5 \square C_3$ by changing the first probe to $B_1' = \{v_{2,1},v_{2,2} \}$ and keeping the second probe the same.
        \begin{proposition}
            Let $C_5 \square C_3$ be the product of two cycles.
            Then $\zeta(C_5 \square C_3)=2$.
        \end{proposition}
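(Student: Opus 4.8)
The plan is to pin $\zeta(C_5 \square C_3)$ between the available bounds and then produce an explicit two-cop strategy. \Cref{prop: no loc 1 product} already gives $\zeta(C_5 \square C_3) \geq 2$ and \cref{cor: Cgrid<4} gives $\zeta(C_5 \square C_3) \leq 3$, so it suffices to show that two cops win in finitely many turns. Following \cref{prop: loc(C5xC5)}, I would open with $B_1' = \{v_{2,1}, v_{2,2}\}$. Because both probes share column $2$, the second difference $DD(B_1', v_{i,j}) = d_{C_3}(j,2) - d_{C_3}(j,1)$ depends only on $j$ and takes three distinct values, so it pins down the row; within a row the distance depends only on $d_{C_5}(i,2)$, which is invariant under $i \mapsto 4-i$. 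Hence after $B_1'$ the Robber is either caught in column $2$ or localized to a horizontal safe pair $O_1 = \{v_{i,j}, v_{4-i,j}\}$ in one known row $j$, exactly as in the $C_5 \square C_5$ argument.

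For the second turn I would keep the same rotated probe $B_2 = \{v_{4,j+1}, v_{2,j+1}\}$. Its second difference $d_{C_5}(i,2) - d_{C_5}(i,4)$ is injective in $i$, so $B_2$ determines the column, while the row is only resolved up to reflection about row $j+1$; its safe sets are therefore the vertical pairs $\{v_{i,j}, v_{i,j-1}\}$. The hard part, and the place where $C_3$ departs from $C_5 \square C_5$, is exactly here: in the five-by-five case $N[O_1]$ meets only three consecutive rows while the vertical safe pairs straddle rows two apart, so $B_2$ yields a cop house covering $N[O_1]$ and wins immediately, whereas $C_3$ has only three rows, so $N[O_1]$ wraps around to occupy all three rows in the two columns of $O_1$ and can contain both members of a vertical pair. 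Thus $B_2$ need not win. I would instead verify, by enumerating the distance vectors on $N[O_1]$ (reducing by the row symmetry of $C_3$ and the column shifts of $C_5$ to the representative $j=0$), that every residual ambiguity is a single vertical pair $O_2 = \{v_{c,j}, v_{c,j-1}\}$ in one known column $c$, and every other vector is realized by a unique vertex.

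It then remains to clear a known vertical pair, and here termination is immediate because $O_2$ lies in a single column, so $N[O_2]$ has only seven vertices confined to columns $c-1, c, c+1$. For the third probe I would take two vertices in distinct rows and distinct columns adjacent to $O_2$, which simultaneously breaks the surviving column reflection and separates the two rows; for the representative $O_2 = \{v_{c,0}, v_{c,2}\}$ the choice $B_3 = \{v_{c,1}, v_{c+1,0}\}$ does this, and a direct check shows the seven distance vectors on $N[O_2]$ are pairwise distinct, so the Cop wins on the third turn. Propagating this through the two types of horizontal pair from $B_1'$ and each resulting column $c$, all reduced to one representative by symmetry, shows that two cops suffice, whence $\zeta(C_5 \square C_3) = 2$.
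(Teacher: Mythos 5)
Your proof is correct, and it does more than reproduce the paper's argument: it identifies and repairs a genuine gap in it. The paper disposes of $C_5 \square C_3$ in one sentence, asserting that the proof of \cref{prop: loc(C5xC5)} survives after changing the first probe to $B_1' = \{v_{2,1}, v_{2,2}\}$ while ``keeping the second probe the same'' --- implicitly, that $B_2 = \{v_{4,j+1}, v_{2,j+1}\}$ again creates a cop house on rows $j-1$ to $j+1$ containing $N[O_1]$, so that the Cop wins on the second turn. As you observe, this transfer fails precisely because $C_3$ has only three rows: rows $j-1$, $j$, $j+1$ exhaust $V(C_3)$, and the whole of $C_5 \square C_3$ cannot be a cop house for a two-vertex probe, since $\dim(C_5 \square C_3) = 3$ by \cref{lem: dim(CxC)}. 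Concretely, with $j=0$ and $O_1 = \{v_{1,0}, v_{3,0}\}$, the probe $B_2 = \{v_{4,1}, v_{2,1}\}$ yields $\vec{D}(B_2, v_{1,0}) = \vec{D}(B_2, v_{1,2}) = [3,2]$ with $v_{1,2} \in N[v_{1,0}]$, so the Robber survives the second turn in exactly the vertical pair you describe. Your supporting computations check out: the second difference of $B_1'$ takes the three distinct values $0,1,-1$ on the rows, so the first probe localizes the Robber to a horizontal pair $\{v_{i,j}, v_{4-i,j}\}$ in a known row; the second difference of $B_2$, namely $d_{C_5}(i,2)-d_{C_5}(i,4)$, takes the five distinct values $1,-1,-2,0,2$, so the column is determined and the only residual ambiguities are vertical pairs $\{v_{c,j}, v_{c,j-1}\}$ in a known column (for $O_1 = \{v_{1,0},v_{3,0}\}$ these are $\{v_{1,0},v_{1,2}\}$ and $\{v_{3,0},v_{3,2}\}$, all other vectors on $N[O_1]$ being unique); and your third probe closes the game, since for the representative $O_2 = \{v_{c,0}, v_{c,2}\}$ and $B_3 = \{v_{c,1}, v_{c+1,0}\}$ the seven vertices of $N[O_2]$ receive the pairwise distinct vectors $\vec{D}(B_3, v_{c,1}) = [0,2]$, $\vec{D}(B_3, v_{c,0}) = [1,1]$, $\vec{D}(B_3, v_{c,2}) = [1,2]$, $\vec{D}(B_3, v_{c+1,0}) = [2,0]$, $\vec{D}(B_3, v_{c+1,2}) = [2,1]$, $\vec{D}(B_3, v_{c-1,0}) = [2,2]$, $\vec{D}(B_3, v_{c-1,2}) = [2,3]$. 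The symmetry reductions you invoke (row rotations of $C_3$, column rotations and the reflection $i \mapsto 4-i$ of $C_5$) are automorphisms of the product, so the representative cases suffice. The net comparison: the paper's two-turn claim, read literally, is false on $C_5 \square C_3$, whereas your three-turn strategy is a complete proof of $\zeta(C_5 \square C_3) = 2$; the extra turn is unavoidable, since no two-vertex probe can resolve all of $N[O_1]$ once the wrap-around in $C_3$ places both members of a vertical safe pair inside it.
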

        It follows that for $m \text{ and }n$ odd, $\zeta(C_m \square C_n) = 2$, unless $m=n=3$.
         
    \subsection{Odd by Even}
        Next, consider the case where $m$ is odd and $n$ is even.
        Since $\psi(C_m)=2$, it follows that $\zeta(C_m \square C_n)=2$ for $n \geq 8$. To determine $\zeta(C_m \square C_n)$ for $n \leq 6$ we start by determining the safe houses for the chosen probes.
        \begin{lemma} \label{lem: O extra}
            Let $C_{2p+1} \square C_{2q}$ be a product of cycles with $p \geq 1$ and $q \in \{2,3 \}$.
            If the Cop probes $B_1 = \{v_{p,2q-1}, v_{p,q-1} \}$ in the first turn, all safe sets will be of the form $O = \{v_{i,j}, v_{2p-i,j}, v_{i,2q-2-j}, v_{2p-i,2q-2-j} \}$ for $i = 0,1,\ldots,2p$ and $j=0,1,\ldots,2q-1$.
            Further for $R_1$ as the set of all vertices $v_{x,y}$ where $x=0,1,\ldots,p$ and $y = q-1,q,\ldots,2q-1$, $R_1$ is a cop house.
        \end{lemma}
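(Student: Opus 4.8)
The plan is to reduce the two-dimensional distance computation to the one-dimensional distance functions of the two factors and then exploit two reflection symmetries of the probe. Since distances in a Cartesian product add, I would write
\[
  \vec{D}(B_1, v_{i,j}) = \bigl[\, \alpha(i) + f(j),\ \alpha(i) + g(j) \,\bigr],
\]
where $\alpha(i) = d_{C_{2p+1}}(i,p)$, $f(j) = d_{C_{2q}}(j, 2q-1)$ and $g(j) = d_{C_{2q}}(j, q-1)$. Thus the probe sees the column index $i$ only through $\alpha$ and the row index $j$ only through the pair $(f,g)$. Let $\tau\colon i \mapsto 2p-i$ be the reflection of $C_{2p+1}$ fixing column $p$, and $\sigma\colon j \mapsto 2q-2-j$ the reflection of $C_{2q}$; note that $\sigma$ fixes exactly the two probe rows $q-1$ and $2q-1$. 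Because $\tau$ fixes the common column of both probe vertices and $\sigma$ fixes both probe rows, each is an isometry preserving all distances to $B_1$, so $\alpha\circ\tau = \alpha$ and $f\circ\sigma = f$, $g\circ\sigma = g$. Applying $\tau$, $\sigma$ and $\tau\sigma$ to $v_{i,j}$ therefore leaves the distance vector unchanged, which shows that the four vertices of $O$ always lie in a common safe set.

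For the reverse inclusion I would use the second difference $DD(B_1, v_{i,j}) = g(j) - f(j)$, in which the $\alpha(i)$ terms cancel, so that it depends on $j$ alone. Since $q \in \{2,3\}$, a short direct computation on $C_4$ and on $C_6$ shows that $DD$ takes equal values at rows $b$ and $j$ precisely when $b \in \{j, \sigma(j)\}$. Hence if $v_{a,b}$ shares the distance vector of $v_{i,j}$, then first $b \in \{j, \sigma(j)\}$, giving $f(b) = f(j)$ and $g(b) = g(j)$; substituting into either coordinate then forces $\alpha(a) = \alpha(i)$. On the odd cycle $C_{2p+1}$ the map $\alpha$ has fibres $\{i, 2p-i\}$, so $a \in \{i, 2p-i\}$. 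Combining the two constraints, $(a,b)$ ranges over exactly the set $O$, so every safe set has the claimed form (a degenerate orbit of size one corresponds to a uniquely determined vertex and carries no safe set).

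It remains to check that $R_1 = \{ v_{x,y} : 0 \le x \le p,\ q-1 \le y \le 2q-1 \}$ is a cop house, that is, that distinct vertices of $R_1$ never share a safe set. Here I would show that the nontrivial orbit members of any $v_{x,y} \in R_1$ leave $R_1$: the column reflection $x \mapsto 2p-x$ sends $\{0,\ldots,p\}$ into $\{p,\ldots,2p\}$, which meets the original range only at the fixed column $x=p$; and the row reflection $\sigma$ sends the rows $q,\ldots,2q-2$ to $q-2,\ldots,0$, all strictly below $q-1$, while fixing only $q-1$ and $2q-1$. Consequently each of $(2p-x,y)$, $(x,\sigma(y))$ and $(2p-x,\sigma(y))$ lies in $R_1$ only when it coincides with $(x,y)$, so no two distinct vertices of $R_1$ belong to the same $O$, and $R_1$ is a cop house.

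The only genuinely computational ingredient is the claim that $DD$ separates the $\sigma$-orbits of rows; since $q$ is restricted to $\{2,3\}$ this is a finite verification rather than a general estimate, and the remainder is symmetry bookkeeping. I expect the main obstacle to be organising the converse direction cleanly---pinning down the row pair via the second difference \emph{before} invoking the column reflection---so that the two reflections are disentangled and the safe set is recovered without circular reasoning.
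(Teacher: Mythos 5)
Your proposal is correct and takes essentially the same route as the paper: the paper likewise uses the additive distance formula $\vec{D}(B_1,v_{i,j})=[\,|p-i|+q-|q-1-j|,\;|p-i|+|q-1-j|\,]$, notes that the second difference $2|q-1-j|-q$ depends only on the row, solves $|q-1-j_1|=|q-1-j_2|$ and then $|p-i_1|=|p-i_2|$ to obtain the row pairs $\{j,2q-2-j\}$ and column pairs $\{i,2p-i\}$, and verifies $R_1$ is a cop house by checking that neither identification holds nontrivially within its index ranges. Your reflection/orbit packaging of the forward inclusion (the isometries $\tau$ and $\sigma$ fixing the probe) is a clean restatement of that same computation rather than a different method, and your finite check of the $DD$ values for $q\in\{2,3\}$ matches what the paper does in general via the absolute-value equation.
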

        \begin{proof}
            Say the Cop probes
            \begin{equation} \label{eq: OxE B1}
                B_1 = \{v_{p,2q-1}, v_{p,q-1} \}
            \end{equation}
            such that the distance vector $\vec{D}(B_1, v_{i,j})$ is given by
            \begin{equation} \label{eq: OxE dist eq}
                \vec{D}(B_1, v_{i,j}) = [|p-i|-|q-1-j|+q, |p-i|+|q-1-j|]
            \end{equation}
            for any vertex $v_{i,j}$.
            This is illustrated on $C_7 \square C_6$ in \cref{fig: OxE general}.
            \begin{figure}[h]
                \centering
                \begin{tikzpicture}[scale=1.2]
                    \draw[fill=black!20] (-0.3,1.7) rectangle (3.3,5.65);
                    \node at (-0.6,3.5) {\footnotesize $R_1$};
                
                    \node[draw, fill=purple, star, star points=4, label=north: {\footnotesize $[4,5]$}] at (0,0) {};
                    \node[draw, fill=purple, star, star points=4, label=north: {\footnotesize $[4,5]$}] at (6,0) {};
                    \node[draw, fill=purple, star, star points=4, label=north: {\footnotesize $[4,5]$}] at (6,4) {};
                    \node[draw, fill=purple, star, star points=4, label=north: {\footnotesize $[4,5]$}] at (0,4) {};
                    
                    \node[draw, fill=purple, regular polygon, regular polygon sides=5, minimum size=16, label=north: {\footnotesize $[3,4]$}] at (1,0) {};
                    \node[draw, fill=purple, regular polygon, regular polygon sides=5, minimum size=16, label=north: {\footnotesize $[3,4]$}] at (5,0) {};
                    \node[draw, fill=purple, regular polygon, regular polygon sides=5, minimum size=16, label=north: {\footnotesize $[3,4]$}] at (5,4) {};
                    \node[draw, fill=purple, regular polygon, regular polygon sides=5, minimum size=16, label=north: {\footnotesize $[3,4]$}] at (1,4) {};
                    
                    \node[draw, fill=purple, star, star points=6, label=north: {\footnotesize $[2,3]$}] at (2,0) {};
                    \node[draw, fill=purple, star, star points=6, label=north: {\footnotesize $[2,3]$}] at (2,4) {};
                    \node[draw, fill=purple, star, star points=6, label=north: {\footnotesize $[2,3]$}] at (4,0) {};
                    \node[draw, fill=purple, star, star points=6, label=north: {\footnotesize $[2,3]$}] at (4,4) {};
                    
                    \node[draw, fill=purple, regular polygon, regular polygon sides=7, minimum size=16, label=north: {\footnotesize $[1,2]$}] at (3,0) {};
                    \node[draw, fill=purple, regular polygon, regular polygon sides=7, minimum size=16, label=north: {\footnotesize $[1,2]$}] at (3,4) {};
                    
                    \node[draw, fill=orange, star, star points=4, label=north: {\footnotesize $[5,4]$}] at (0,1) {};
                    \node[draw, fill=orange, star, star points=4, label=north: {\footnotesize $[5,4]$}] at (0,3) {};
                    \node[draw, fill=orange, star, star points=4, label=north: {\footnotesize $[5,4]$}] at (6,1) {};
                    \node[draw, fill=orange, star, star points=4, label=north: {\footnotesize $[5,4]$}] at (6,3) {};
                    
                    \node[draw, fill=orange, regular polygon, regular polygon sides=5, minimum size=16, label=north: {\footnotesize $[4,3]$}] at (1,1) {};
                    \node[draw, fill=orange, regular polygon, regular polygon sides=5, minimum size=16, label=north: {\footnotesize $[4,3]$}] at (1,3) {};
                    \node[draw, fill=orange, regular polygon, regular polygon sides=5, minimum size=16, label=north: {\footnotesize $[4,3]$}] at (5,1) {};
                    \node[draw, fill=orange, regular polygon, regular polygon sides=5, minimum size=16, label=north: {\footnotesize $[4,3]$}] at (5,3) {};
                    
                    \node[draw, fill=orange, star, star points=6, label=north: {\footnotesize $[3,2]$}] at (2,1) {};
                    \node[draw, fill=orange, star, star points=6, label=north: {\footnotesize $[3,2]$}] at (2,3) {};
                    \node[draw, fill=orange, star, star points=6, label=north: {\footnotesize $[3,2]$}] at (4,1) {};
                    \node[draw, fill=orange, star, star points=6, label=north: {\footnotesize $[3,2]$}] at (4,3) {};
                    
                    \node[draw, fill=orange, regular polygon, regular polygon sides=7, minimum size=16, label=north: {\footnotesize $[2,1]$}] at (3,1) {};
                    \node[draw, fill=orange, regular polygon, regular polygon sides=7, minimum size=16, label=north: {\footnotesize $[2,1]$}] at (3,3) {};
                    
                    \node[draw, fill=cyan, star, star points=4, label=north: {\footnotesize $[6,3]$}] at (0,2) {};
                    \node[draw, fill=cyan, star, star points=4, label=north: {\footnotesize $[6,3]$}] at (6,2) {};
                    
                    \node[draw, fill=cyan, regular polygon, regular polygon sides=5, minimum size=16, label=north: {\footnotesize $[5,2]$}] at (1,2) {};
                    \node[draw, fill=cyan, regular polygon, regular polygon sides=5, minimum size=16, label=north: {\footnotesize $[5,2]$}] at (5,2) {};
                    
                    \node[draw, fill=cyan, star, star points=6, label=north: {\footnotesize $[4,1]$}] at (2,2) {};
                    \node[draw, fill=cyan, star, star points=6, label=north: {\footnotesize $[4,1]$}] at (4,2) {};
                    
                    \node[draw, fill=white, minimum size=16, label=north: {\footnotesize $[3,0]$}] at (3,2) {};
                    \node[draw, fill=white, minimum size=16, label=north: {\footnotesize $[0,3]$}] at (3,5) {};
                    
                    \node[draw, fill=lime, star, star points=4, label=north: {\footnotesize $[3,6]$}] at (0,5) {};
                    \node[draw, fill=lime, star, star points=4, label=north: {\footnotesize $[3,6]$}] at (6,5) {};
                    
                    \node[draw, fill=lime, regular polygon, regular polygon sides=5, minimum size=16, label=north: {\footnotesize $[2,5]$}] at (1,5) {};
                    \node[draw, fill=lime, regular polygon, regular polygon sides=5, minimum size=16, label=north: {\footnotesize $[2,5]$}] at (5,5) {};
                    
                    \node[draw, fill=lime, star, star points=6, label=north: {\footnotesize $[1,4]$}] at (2,5) {};
                    \node[draw, fill=lime, star, star points=6, label=north: {\footnotesize $[1,4]$}] at (4,5) {};
                \end{tikzpicture}
                \caption{The graph $C_7 \square C_6$ with probe $B_1$ as in Equation \cref{eq: OxE B1}.
                The distances from vertices to $B_1$ as well as the cop house $R_1$ are shown.}
                \label{fig: OxE general}
            \end{figure}
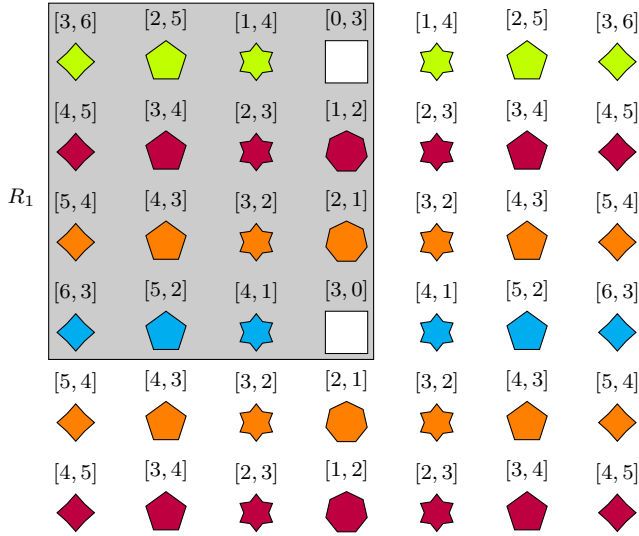
            
            The second difference is given by $DD(B_1,v_{i,j}) = 2|q-1-j|-q$ and is therefore not dependant on the column of vertex $v_{i,j}$ and only on its row.
            Consider two vertices $v_{i_1,j_1}, v_{i_2,j_2}$ and let $DD(B_1,v_{i_1,j_1}) = DD(B_1,v_{i_2,j_2})$ such that
            \begin{equation} \label{eq: OxE SH solve}
                |q-1-j_1| = |q-1-j_2|. 
            \end{equation}
            There are two solutions to Equation \cref{eq: OxE SH solve}: $j_1=j_2$ and $j_1+j_2 = 2q-2$.
            All vertices in the same row are therefore in the same safe house, where vertices in different rows are in the same safe house only if $j_1+j_2 = 2q-2$.
            It follows that that every safe house contains two rows of $C_{2p+1} \square C_{2q}$, except the two safe houses containing rows $q-1$ and $2q-1$ respectively.
            In order to calculate the safe sets, let $\vec{D}(B_1,v_{i_1,j_1}) = \vec{D}(B_1,v_{i_2,j_2})$.
            Since the two vertices are in the same safe house, it follows from Equations \cref{eq: OxE dist eq} and \cref{eq: OxE SH solve} that two vertices are in the same safe set if $|p-i_1| = |p-i_2|$.
            The only nontrivial solution is $i_1+i_2=2p$.
            Therefore all safe sets have the form $O_1 = \{v_{i,j}, v_{2p-i,j}, v_{i,2q-2-j}, v_{2p-i,2q-2-j} \}$ for $i = 0,1,\ldots,2p$ and $j=0,1,\ldots,2q-1$.
            Note that if $j=q-1$, $j=2q-1$ or $i=p$, the safe set only contains two vertices.
            
            Next consider $R_1$.
            By the solution to Equation \eqref{eq: OxE SH solve}, two vertices in $R_1$ only belong to the same safe house if they are in the same row.
            Therefore two vertices $v_{x_1,y_1}, v_{x_2,y_2}$ in $R_1$ are only part of the same safe set if $|p-x_1| = |p-x_2|$ such that $x_1+x_2=2p$.
            This is never true inside $R_1$ and therefore every two vertices in $R_1$ belong to different safe sets and $R_1$ is a cop house.
        \end{proof}
        Since the $C_m \square C_n$ is vertex transitive, the following corollary follows:
        \begin{corollary} \label{cor: R2 cop house}
            Say the Cop probes $B_2 = \{v_{a+p,b+q}, v_{a+p,b} \}$ in the second turn such that $B_2 = g(B_1)$, where $g$ is a translation.
            Then for $R_2 = g(R_1)$, $R_2$ is a cop house.
            Further two distinct vertices $v_{i_1,j_1}$ and $v_{i_2,j_2}$ are only part of the same safe set if $(i_1+i_2) \equiv 2a+2p\bmod{(2p+1)}$ or $(j_1+j_2) \equiv 2b\bmod{(2q)}$.
            Note that if $i_1 \neq i_2$ and $j_1 \neq j_2$, then both these equations need to hold.
        \end{corollary}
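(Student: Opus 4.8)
The plan is to reduce everything to the observation that the coordinate translations $g : v_{i,j} \mapsto v_{i+s,\, j+t}$, with indices taken modulo $2p+1$ and $2q$, are automorphisms of $C_{2p+1} \square C_{2q}$ (this is the concrete form of the vertex transitivity invoked in the statement). Indeed, by the definition of the Cartesian product, adjacency of $v_{i,j}$ and $v_{i',j'}$ depends only on the coordinate differences modulo $2p+1$ and $2q$, which are preserved by $g$; hence $g$ is a distance-preserving bijection. First I would pin down the particular translation carrying $B_1$ to $B_2$: writing $B_1 = \{v_{p,(q-1)+q},\, v_{p,q-1}\}$, the map $g : v_{i,j} \mapsto v_{i+a,\, j+b-(q-1)}$ sends $v_{p,q-1}\mapsto v_{a+p,b}$ and $v_{p,2q-1}\mapsto v_{a+p,b+q}$, so $g(B_1)=B_2$ and $g(R_1)=R_2$.

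Next, since $g$ preserves distances, $\vec{D}(B_2, g(v)) = \vec{D}(g(B_1), g(v)) = \vec{D}(B_1, v)$ for every vertex $v$. Consequently two vertices lie in a common safe set for the probe $B_2$ if and only if their $g$-preimages lie in a common safe set for $B_1$; that is, the safe sets for $B_2$ are exactly the $g$-images of the safe sets for $B_1$. Applied to the cop house $R_1$ of \cref{lem: O extra}, this shows immediately that $R_2 = g(R_1)$ is a cop house, because the defining property of a cop house, namely that its vertices lie in pairwise distinct safe sets, is transported verbatim by the bijection $g$.

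Then I would translate the explicit membership condition of \cref{lem: O extra} through $g^{-1}$. By that lemma, two distinct vertices $v_{x_1,y_1}, v_{x_2,y_2}$ share a $B_1$-safe set exactly when $\bigl(x_1=x_2 \text{ or } x_1+x_2\equiv 2p \bmod{(2p+1)}\bigr)$ and $\bigl(y_1=y_2 \text{ or } y_1+y_2\equiv 2q-2 \bmod{2q}\bigr)$. Substituting the preimages $g^{-1}(v_{i_k,j_k}) = v_{i_k-a,\, j_k-b+q-1}$ and simplifying, the two congruences become $i_1+i_2\equiv 2a+2p \bmod{(2p+1)}$ and $j_1+j_2\equiv 2b \bmod{2q}$ respectively. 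A short case analysis on whether $i_1=i_2$ and whether $j_1=j_2$ then yields the stated form: distinctness of the two vertices rules out $i_1=i_2$ and $j_1=j_2$ holding simultaneously, so at least one of the two congruences must hold, and when $i_1\neq i_2$ and $j_1\neq j_2$ both are forced.

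The computation is entirely routine; the only points requiring care are the bookkeeping of the modular shifts under $g^{-1}$, in particular checking that the $+(q-1)$ offset in the second coordinate converts the constant $2q-2$ into $2b$, and confirming that the degenerate safe sets of \cref{lem: O extra} with fewer than four vertices, those with $i=p$, $j=q-1$ or $j=2q-1$, are already subsumed by these congruences and need no separate treatment.
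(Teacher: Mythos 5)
Your proposal is correct and takes exactly the route the paper intends: the paper offers no written proof beyond the remark that $C_m \square C_n$ is vertex transitive, and your argument is precisely the fleshed-out version of that remark, exhibiting the explicit translation $g$ carrying $B_1$ to $B_2$, noting it preserves distances and hence transports safe sets and cop houses, and pushing the congruence conditions of \cref{lem: O extra} through $g^{-1}$ (your modular bookkeeping, including the conversion of $2q-2$ into $2b$, checks out). No gaps; if anything, your write-up supplies more detail than the paper itself.
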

        The lemma can also easily be adapted for the even by even case:
        \begin{corollary} \label{cor: ExE extra}
            Let $C_{2p} \square C_{2q}$ be a product of cycles with $p \geq  q \geq 4$.
            If the Cop probes $B_1 = \{v_{p,2q-1}, v_{p,q-1} \}$ in the first turn, all safe sets will be of the form $O = \{v_{i,j}, v_{2p-i,j}, v_{i,2q-2-j}, v_{2p-i,2q-2-j} \}$ for $i = 0,1,\ldots,2p$ and $j=0,1,\ldots,2q-1$.
            Further, for $R_1$ defined as in \cref{lem: O extra}, $R_1$ is a cop house.
            Also if $f$ is a translation such that $B_2 = f(B_1)$, then $R_2 = f(R_1)$ is a cop house.
        \end{corollary}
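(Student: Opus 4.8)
The plan is to adapt the proof of \cref{lem: O extra} essentially verbatim, the only structural change being that the first factor is now the even cycle $C_{2p}$ rather than the odd cycle $C_{2p+1}$. The crucial point to check is that the distance formula \cref{eq: OxE dist eq} survives this replacement. First I would verify that $d_{C_{2p}}(i,p) = |p-i|$ for every column $i \in \{0,1,\ldots,2p-1\}$: since $\operatorname{diam}(C_{2p}) = p$ and $|p-i| \le p$ throughout this range, the shorter of the two arcs from $i$ to $p$ has length exactly $|p-i|$, the only antipodal tie occurring at $i=0$, where both arcs have length $p$ and so the value $|p-0|=p$ is unaffected. The row factor is the same $C_{2q}$ as in \cref{lem: O extra}, so $d_{C_{2q}}(j,q-1)=|q-1-j|$ and $d_{C_{2q}}(j,2q-1)=q-|q-1-j|$ continue to hold. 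Consequently $\vec{D}(B_1,v_{i,j})$ is given by exactly the same expression \cref{eq: OxE dist eq}, and the second difference is again $DD(B_1,v_{i,j}) = 2|q-1-j|-q$.

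Once the distance vectors agree with those in \cref{lem: O extra}, the safe-house and safe-set computations transfer unchanged. I would repeat the argument that $DD$ depends only on the row, so that $|q-1-j_1|=|q-1-j_2|$ characterises a common safe house, giving $j_1=j_2$ or $j_1+j_2=2q-2$; and that within a safe house the vectors coincide if and only if $|p-i_1|=|p-i_2|$, i.e.\ $i_1=i_2$ or $i_1+i_2=2p$. This yields precisely the safe sets $O=\{v_{i,j}, v_{2p-i,j}, v_{i,2q-2-j}, v_{2p-i,2q-2-j}\}$. The one genuine difference from the odd case to flag is the modular reading of $2p-i$: in $C_{2p}$ we have $2p \equiv 0 \imod{2p}$, so the column $i=0$ now collapses ($v_{2p-0,j}=v_{0,j}$) in addition to the column $i=p$, meaning both of these columns give safe sets with at most two vertices; this is already absorbed by the mod-$2p$ indexing in the statement.

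For the cop house $R_1$ (columns $0,\ldots,p$ and rows $q-1,\ldots,2q-1$) I would argue as before: for two distinct vertices of $R_1$ to lie in one safe set we would need $j_1+j_2=2q-2$ with both rows in $\{q-1,\ldots,2q-1\}$, forcing $j_1=j_2=q-1$, and $i_1+i_2=2p$ with both columns in $\{0,\ldots,p\}$, forcing $i_1=i_2=p$; hence no two distinct vertices of $R_1$ share a safe set and $R_1$ is a cop house. Finally, since $C_{2p}\square C_{2q}$ is vertex transitive, any translate $R_2=f(R_1)$ with $B_2=f(B_1)$ is again a cop house, exactly as in \cref{cor: R2 cop house}. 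I expect the only real obstacle to be the first step, namely confirming that the even-cycle column distances coincide with the odd-cycle ones including the harmless antipodal tie at $i=0$, after which the entire argument is a transcription of \cref{lem: O extra}; I would retain the hypothesis $p\ge q\ge 4$ as stated, since the degenerate small cases (in particular $C_{2p}\square C_4$, where $\zeta=3$) are deliberately excluded from this regime.
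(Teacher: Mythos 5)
Your proposal is correct and is exactly the route the paper intends: the paper gives no explicit proof of this corollary, stating only that \cref{lem: O extra} ``can easily be adapted,'' and your adaptation---checking that $d_{C_{2p}}(i,p)=|p-i|$ on $\{0,\ldots,2p-1\}$ so that \cref{eq: OxE dist eq} and the second-difference analysis carry over, then rerunning the safe-house, safe-set and cop-house arguments and invoking vertex transitivity for $R_2=f(R_1)$---is precisely that adaptation, including the correct observation that $2p\equiv 0\imod{2p}$ collapses the $i=0$ column. One trivial nitpick: in the cop-house step the congruence $i_1+i_2\equiv 2p\imod{2p}$ restricted to $i_1,i_2\in\{0,\ldots,p\}$ forces $i_1=i_2=p$ \emph{or} $i_1=i_2=0$, but both branches contradict distinctness, so your conclusion stands.
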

        
        Now for the case of $m$ odd and $n \leq 6$ we omit the restriction that $m \geq n$.
        If this restriction is included, a separate proof will be needed for even by odd, which will be equivalent to the one given here.
        \begin{proposition} \label{prop: OxE small case}
            Let $C_{2p+1} \square C_{2q}$ be a product of cycles with $p \geq 1$ where $q \in \{2,3 \}$.
            Then $\zeta(C_{2p+1} \square C_{2q}) = 2$.
        \end{proposition}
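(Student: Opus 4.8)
The plan is to exhibit an explicit winning strategy for two cops. Since \cref{prop: no loc 1 product} gives $\zeta(C_{2p+1}\square C_{2q})\ge 2$ and \cref{cor: Cgrid<4} gives $\zeta(C_{2p+1}\square C_{2q})\le 3$, it suffices to show that two cops can always locate the Robber. The first probe will be $B_1=\{v_{p,2q-1},v_{p,q-1}\}$ from \cref{lem: O extra}, after which the Robber is confined to a safe set $O_1=\{v_{i,j},v_{2p-i,j},v_{i,2q-2-j},v_{2p-i,2q-2-j}\}$ that is invariant under reflection in the column $p$ and in the row $q-1$. If $|O_1|=1$ the Cop wins immediately, so the work is to resolve the two- and four-element cases.

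Every subsequent probe will be a translate $B_{\alpha}=g_{\alpha}(B_1)$, so that by \cref{cor: R2 cop house} the induced partition into safe sets is again governed by two reflections: two distinct vertices can share a safe set only if their column indices satisfy $(i_1+i_2)\equiv 2a+2p\bmod{(2p+1)}$ or their row indices satisfy $(j_1+j_2)\equiv 2b\bmod{(2q)}$, with both required when the vertices differ in each coordinate. The idea is to use the translation parameters $a$ and $b$ to break these two symmetries one at a time. Concretely, I would first choose $a\not\equiv 0$ to destroy the column reflection, and then choose $b$ so that the row reflection of $B_{\alpha}$ does not identify any two of the few rows met by $N[O_{\alpha-1}]$; the aim is that \cref{cor: R2 cop house} forces each new safe set to meet $N[O_{\alpha-1}]$ in a strictly simpler piece. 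Tracking the robber set $O_{\alpha}=N[O_{\alpha-1}]\cap S$ against the revealed safe set $S$, this should drive a four-element robber set down to a horizontal safe pair $\{v_{i,j},v_{2p-i,j}\}$, and a horizontal pair (which sits symmetrically about the column $p$ and occupies only three rows in its neighbourhood) down to a single vertex, so that the Cop wins in a bounded number of turns.

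The main obstacle is the antipodal ambiguity forced by the even cycle $C_{2q}$. A robber set that occupies a single column together with two rows at distance $q$ apart (antipodal in $C_{2q}$) cannot be resolved by any translate of $B_1$, since such a probe places both cops in one column and its row reflection necessarily identifies the two antipodal rows; in the smallest cases these antipodal rows reappear inside the neighbourhood and block a one-shot finish. The same lack of room appears when $m=3$: a translate of $B_1$ then places both cops in a single column and so cannot separate the three columns at all. I therefore expect the heart of the proof to be a case analysis on the type and position of the robber set -- horizontal pair, antipodal vertical pair, and four-element set -- in which these degenerate configurations are handled by explicitly chosen probes (for small $m$, with the two cops placed in \emph{different} columns) rather than by translates of $B_1$, with the distinction between $q=2$ and $q=3$ treated separately. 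Termination then follows by arguing that each round either strictly decreases $|O_{\alpha}|$ or manoeuvres the robber set into a cop house, after which a repeat probe wins.
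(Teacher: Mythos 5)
Your opening matches the paper's: the first probe $B_1=\{v_{p,2q-1},v_{p,q-1}\}$ from \cref{lem: O extra}, a translated second probe governed by \cref{cor: R2 cop house}, and you correctly diagnose the two obstructions to continuing with translates (antipodal rows in $C_{2q}$, and the inability to separate columns when $m=3$). But from the second probe onward your text is a plan rather than a proof, and the deferred part is exactly where the paper's content lies. First, the ``break the symmetries one at a time'' phase cannot be executed for $q\in\{2,3\}$: the row reflection induced by \emph{any} translate of $B_1$ has fixed rows $b$ and $b+q \pmod{2q}$, so every window of three consecutive rows --- and $N[O_\alpha]$ always spans such a window --- contains a fixed row (or an identified pair $\{y-1,y+1\}$), whatever $b$ you pick; likewise the column reflection cannot avoid all column pairs of $N[O_\alpha]$ unless $p$ is large. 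Consequently the explicit probes with the two cops in \emph{different} columns are not a patch for ``degenerate configurations'' but the main mechanism: in the paper's proof every branch after turn two uses them ($B_3=\{v_{x-1,y+1},v_{x,y}\}$, $\{v_{x+1,y+1},v_{x,y}\}$, $\{v_{x,y},v_{x-1,y+1}\}$ in \cref{case: OxE d=1}, \cref{case: OxE hord=2}, \cref{case: OxE verd=2}, then a fourth probe such as $\{v_{x-p+1,y},v_{x-p,y-1}\}$, with separate verifications for $p=1$ versus $p\geq 2$ and $q=2$ versus $q=3$). You name none of these probes, and since \cref{cor: R2 cop house} says nothing about non-translate probes, each one requires its own distance enumeration over $N[O_\alpha]$ --- the tables and figures that constitute the bulk of the paper's argument. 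Your typology of robber sets (horizontal pair, antipodal vertical pair, four-element set) also omits \emph{diagonal} safe pairs, which these diagonal probes necessarily create and which force the entire fourth round of the paper's strategy.

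Second, your termination argument is wrong as stated. The winning strategy passes from an axis-aligned pair at distance one or two to a diagonal pair, i.e.\ from a robber set of size two to another robber set of size two: progress is measured by the \emph{type} of the pair, not by $|O_\alpha|$, so ``each round strictly decreases $|O_\alpha|$'' fails for the actual dynamics, and you establish no alternative invariant. (Your proposed reduction chain, four-element set to horizontal pair to singleton, is not achievable by translates for small $p$, as shown above.) Moreover, ``manoeuvres the robber set into a cop house, after which a repeat probe wins'' needs $N[O_\alpha]$, not merely $O_\alpha$, inside the cop house at the moment of probing, since the Robber moves before the next probe. In sum: the skeleton and the diagnosis of the obstacles are right, but the case analysis with explicitly chosen third and fourth probes --- and the verification that their safe sets meet $N[O_\alpha]$ only in the claimed pairs --- is missing, and that is the proof.
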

        \begin{proof}
            In the first turn, the Cop probes $B_1 = \{v_{p,2q-1}, v_{p,p-1} \}$ such that the Robber is localized to robber set
            \begin{equation} \label{eq: O}
                O_1 = \{v_{i,j}, v_{2p-i,j}, v_{i,2q-2-j}, v_{2p-i,2q-2-j} \}
            \end{equation}
            by \cref{lem: O extra}, where $i = 0,1,\ldots,2p$ and $j=0,1,\ldots,2q-1$.
            Now define $d_i = d(v_{i,j}, v_{2p-i,j})$ and $d_j = d(v_{i,j}, v_{i,2q-2-j})$ such that the robber set can be written as $O_1 = \{v_{a,b}, v_{a+d_i,b}, v_{a,b+d_j}, v_{a+d_i,b+d_j} \}$ where $v_{i,j}=v_{a,b}$ need not be true.
            The distances $d_i$ and $d_j$ are given by $d_i = \min\{2i+1,2p-2i \}$ and $d_j = \min\{2j+2,2q-2-2j \}$.
            Note that $d_i$ and $d_j$ are also calculated modulo $m$ and $n$ respectively.
            It follows that $d_i\leq p$ and since $q \in \{2,3\}$, we have that $d_j \in \{0,2 \}$.
            In the second turn, the Cop probes
            \begin{equation} \label{eq: B2 OxE}
                B_2 = \{v_{a+p,b+q}, v_{a+p,b} \}
            \end{equation}
            such that $B_2 = g(B_1)$ where $g$ is some translation function.
            The vertices of $O_1$ will be labeled $v_{a,b}=u_1,v_{a,b+d_j}=u_2, v_{a+d_i,b+d_j}=u_3$ and $v_{a+d_i,b}=u_4$ with neighbours $u_l^N, u_l^E, u_l^S, u_l^W$ for $l=1,2,3,4$.
            Let $R_2$ be the set of all vertices $v_{s,t}$ where $s = a,a+1,\ldots,a+p$ and $t = b,b+1,\ldots,b+q$ such that $R_2 = g(R_1)$.
            Then $R_2$ is a cop house by \cref{cor: R2 cop house}.
            The vertices in $N[O_1]$ as well as probe $B_2$ are illustrated in \cref{fig: OxE N[O]}.
            In the figure, the region $R_2$ is indicated with a dotted square.
            \begin{figure}[h]
                \centering
                \begin{tikzpicture}[scale=1]
                    \draw[dotted] (0,0) rectangle (7,6.2);
                
                    \node[draw, circle, label=south: {\footnotesize $u_1$}, fill=red] at (0,0) {};
                    \node[draw, label=south: {\footnotesize $v_{a+p,b}$}, fill=white] at (7,0) {};
                    \node[draw, label=south: {\footnotesize $v_{a+p,b+q}$}, fill=white] at (7,6.2) {};
                    
                    \node[draw, circle, label=south: {\footnotesize $u_1^W$}, fill=red!50] at (-1,0) {};
                    \node[draw, circle, label=south: {\footnotesize $u_1^E$}, fill=red!50] at (1,0) {};
                    \node[draw, circle, label=south: {\footnotesize $u_1^S$}, fill=red!50] at (0,-1) {};
                    \node[draw, circle, label=south: {\footnotesize $u_1^N$}, fill=red!50] at (0,1) {};
                    
                    \node[draw, circle, label=south: {\footnotesize $u_4$}, fill=red] at (4.5,0) {};
                    \node[draw, circle, label=south: {\footnotesize $u_3$}, fill=red] at (4.5,4.4) {};
                    \node[draw, circle, label=south: {\footnotesize $u_2$}, fill=red] at (0,4.4) {};
                    
                    \node[draw, circle, fill=red!50, label=south: {\footnotesize $u_4^W$}] at (3.5,0) {};
                    \node[draw, circle, fill=red!50, label=south: {\footnotesize $u_4^E$}] at (5.5,0) {};
                    \node[draw, circle, fill=red!50, label=south: {\footnotesize $u_4^S$}] at (4.5,-1) {};
                    \node[draw, circle, fill=red!50, label=south: {\footnotesize $u_4^N$}] at (4.5,1) {};
                    
                    \node[draw, circle, label=south: {\footnotesize $u_2^S$}, fill=red!50] at (0,3.4) {};
                    \node[draw, circle, label=south: {\footnotesize $u_2^N$}, fill=red!50] at (0,5.4) {};
                    \node[draw, circle, label=south: {\footnotesize $u_2^E$}, fill=red!50] at (1,4.4) {};
                    \node[draw, circle, label=south: {\footnotesize $u_2^W$}, fill=red!50] at (-1,4.4) {};
                    
                    \node[draw, circle, label=south: {\footnotesize $u_3^E$}, fill=red!50] at (5.5,4.4) {};
                    \node[draw, circle, label=south: {\footnotesize $u_3^W$}, fill=red!50] at (3.5,4.4) {};
                    \node[draw, circle, label=south: {\footnotesize $u_3^N$}, fill=red!50] at (4.5,5.4) {};
                    \node[draw, circle, label=south: {\footnotesize $u_3^S$}, fill=red!50] at (4.5,3.4) {};
                \end{tikzpicture}
                \caption{The vertices in $N[O]$ where the two probed vertices are squares and the cop house $R_2$ is indicated with a dotted square.
                Note that $v_{a,b}=u_1,v_{a,b+d_j}=u_2, v_{a+d_i,b+d_j}=u_3$ and $v_{a+d_i,b}=u_4$.}
                \label{fig: OxE N[O]}
            \end{figure}
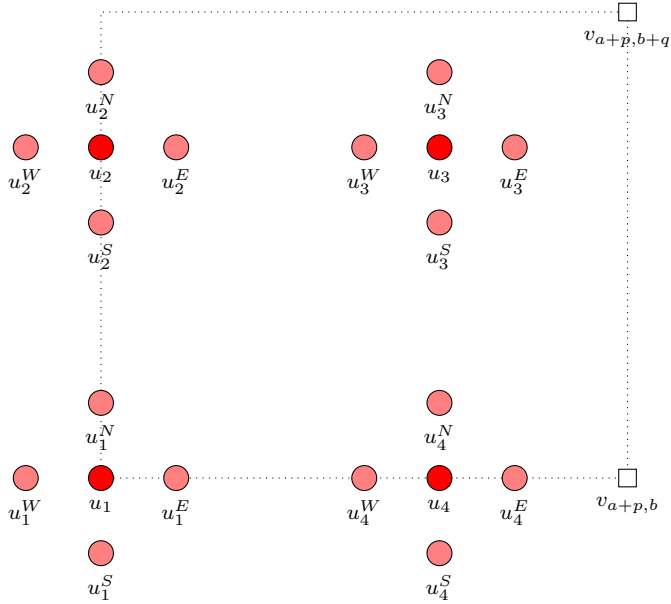
            
            We now show that every safe set in the second turn is a vertical or horizontal safe pair, where the two vertices in the safe set are at distance one or two from each other.
            First, consider $N[\{u_1, u_4\}]$.
            By \cref{cor: R2 cop house} all vertices in row $b$ belong to a safe house and no vertices outside of this row are part of the same safe house.
            Further, safe sets in this row only contain two vertices and therefore none of the vertices in row $b$ are part of vertical safe pairs.
            By \cref{cor: R2 cop house} it follows that $\{u_1,u_1^W \}, \{u_1^N, u_1^S \}$ and $\{u_4^N, u_4^S \}$ are safe sets.
            Vertex $u_1^E$ can only be in a safe set with a vertex outside $R_2$ in row $b$.
            Thus the only option is $u_4^E$ if $d_i=p$.
            Then, $i_1+i_2= (a+1) + (a+d_i+1) = 2a+p+2$.
            This only satisfies $(i_1+i_2) \equiv 2a+2p\bmod{(2p+1)}$ if $p=2$, in which case the safe set $\{u_1^E, u_4^E \} = S_2^h$.
            If $d_i=p>2$, then $\{u_4^E, u_4^W\}$ is a horizontal safe pair at distance two. 
            Otherwise the vertices $u_4^W, u_4, u_4^E$  will be inside cop house $R_2$ and are therefore not part of the same safe set.
            Since every safe pair has a vertex inside the cop house, no pair is part of a safe set containing 4 vertices.
            
            If $d_i\leq p-2$, then $u_2^W$ is the only neighbour of $u_2$ and $u_3$ outside of $R_2$.
            Otherwise a similar argument can be applied to the vertices in $N[\{u_2, u_3\}]$.
            Therefore all safe sets in $N[\{O_1\}]$ contain two vertices in the same row or column, a distance of one or two apart.
            
            In the next turn, the Cop chooses $B_3$ such that the Robber is localized to a diagonal safe pair.
            This probe will depend on the robber set $O_2$, where $O_2 = \{v_{x,y}, v_{x+1,y} \}$, $O_2 = \{v_{x,y}, v_{x+2,y} \}$ or $O_2 = \{v_{x,y}, v_{x,y-2} \}$.
            \begin{case}[$O_2 = \{v_{x,y}, v_{x+1,y} \}$] \label{case: OxE d=1}
                The Cop probes $B_3 = \{v_{x-1,y+1}, v_{x,y} \}$ such that the distances from vertices in $N[O_2]$ are given in \cref{tab: OxE case d=1}.
                It can be seen that the only safe sets, are diagonal safe pairs.
                \begin{table}[h]
                    \centering
                    \setlength{\tabcolsep}{4.2pt} % Default value: 6pt
                    \caption{The distances from vertices in $N[O_2]$ to $B_3$ for \cref{case: OxE d=1}.
                    Note that the only safe sets, are diagonal safe pairs.}
                    \begin{tabular}{c | c c c c c c c c}
                        $v \in N[O_2]$ & $v_{x,y}$ & $v_{x-1,y}$ & $v_{x+1,y}$ & $v_{x,y-1}$ & $v_{x,y+1}$ & $v_{x+1,y+1}$ & $v_{x+1,y-1}$ & $v_{x+2,y}$ \\ \hline
                        $\vec{D}(B_3, v)$, $p=1$ & $[2,0]$ & $[1,1]$ & $[2,1]$ & $[3,1]$ & $[1,1]$ & $[1,2]$ & $[3,2]$ & N/A \\
                        $\vec{D}(B_3, v)$, $p=2$ & $[2,0]$ & $[1,1]$ & $[3,1]$ & $[3,1]$ & $[1,1]$ & $[2,2]$ & $[4,2]$ & $[3,2]$ \\
                        $\vec{D}(B_3, v)$, $p \geq 3$ & $[2,0]$ & $[1,1]$ & $[3,1]$ & $[3,1]$ & $[1,1]$ & $[2,2]$ & $[4,2]$ & $[4,2]$
                    \end{tabular}
                    \label{tab: OxE case d=1}
                \end{table}
            \end{case}
            
            \begin{case}[$O_2 = \{v_{x,y}, v_{x+2,y} \}$] \label{case: OxE hord=2}
                Note that this case only holds for $p \geq 2$.
                The Cop probes $B_3 = \{v_{x+1,y+1}, v_{x,y} \}$, where the distances from vertices in $N[O_2]$ to $B_3$ are given in \cref{fig: OxE case hord=2}.
                It can again be seen that the only safe sets, are diagonal safe pairs.
                Note that if $p=2$, then $\vec{D}(B_3, v_{x+2,y}) = [3,2]$ and not $[3,3]$.
                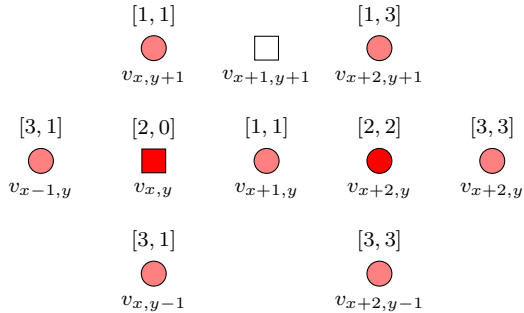
\begin{figure}[h]
                    \centering
                    \begin{tikzpicture}[scale=1.5]
                        \node[draw, minimum size=9, fill=red, label=south: {\footnotesize $v_{x,y}$}, label=north: {\footnotesize $[2,0]$}] at (0,0) {};
                        \node[draw, fill=red, circle, label=south: {\footnotesize $v_{x+2,y}$}, label=north: {\footnotesize $[2,2]$}] at (2,0) {};
                        \node[draw, minimum size=9, fill=white, label=south: {\footnotesize $v_{x+1,y+1}$}] at (1,1) {};
                        
                        \node[draw, circle, fill=red!50, label=south: {\footnotesize $v_{x,y-1}$}, label=north: {\footnotesize $[3,1]$}] at (0,-1) {};
                        \node[draw, circle, fill=red!50, label=south: {\footnotesize $v_{x,y+1}$}, label=north: {\footnotesize $[1,1]$}] at (0,1) {};
                        \node[draw, circle, fill=red!50, label=south: {\footnotesize $v_{x+1,y}$}, label=north: {\footnotesize $[1,1]$}] at (1,0) {};
                        \node[draw, circle, fill=red!50, label=south: {\footnotesize $v_{x-1,y}$}, label=north: {\footnotesize $[3,1]$}] at (-1,0) {};
                        \node[draw, circle, fill=red!50, label=south: {\footnotesize $v_{x+2,y}$}, label=north: {\footnotesize $[3,3]$}] at (3,0) {};
                        \node[draw, circle, fill=red!50, label=south: {\footnotesize $v_{x+2,y+1}$}, label=north: {\footnotesize $[1,3]$}] at (2,1) {};
                        \node[draw, circle, fill=red!50, label=south: {\footnotesize $v_{x+2,y-1}$}, label=north: {\footnotesize $[3,3]$}] at (2,-1) {};
                    \end{tikzpicture}
                    \caption{An illustration of probe $B_3$ in \cref{case: OxE hord=2}.
                    It can again be seen that the only safe sets, are diagonal safe pairs.
                    Note that if $p=2$, then $\vec{D}(B_3, v_{x+2,y}) = [3,2]$ and not $[3,3]$.}
                    \label{fig: OxE case hord=2}
                \end{figure}
            \end{case}
            
            \begin{case}[$O_2 = \{v_{x,y}, v_{x,y-2} \}$] \label{case: OxE verd=2}
                The Cop now probes $B_3 = \{v_{x,y}, v_{x-1,y+1} \}$.
                The distances from vertices in $N[O_2]$ to $B_3$ are given in \cref{tab: OxE verd=2} for $p \geq 2$.
                These distances are given in \cref{tab: OxE verd=2 p=1} for $p=1$.
                In both tables it can be seen that the only safe sets are diagonal safe pairs.
                \begin{table}[h]
                    \centering
                    \setlength{\tabcolsep}{2.4pt} % Default value: 6pt
                    \caption{The distances from vertices in $N[O_2]$ to $B_3$ for \cref{case: OxE verd=2} when $p \geq 2$.
                    Note that the only safe sets, are diagonal safe pairs.}
                    \begin{tabular}{c | c c c c c c c c c}
                        $v \in N[O_2]$ & $v_{x,y}$ & $v_{x-1,y}$ & $v_{x+1,y}$ & $v_{x,y+1}$ & $v_{x,y-1}$ & $v_{x,y-2}$ & $v_{x-1,y-2}$ & $v_{x+1,y-2}$ & $v_{x,y-3}$ \\ \hline
                        $\vec{D}(B_3, v)$, $q=2$ & $[0,2]$ & $[1,1]$ & $[1,3]$ & $[1,1]$ & $[1,3]$ & $[2,2]$ & $[3,1]$ & $[3,3]$ & N/A \\
                        $\vec{D}(B_3, v)$, $q=3$ & $[0,2]$ & $[1,1]$ & $[1,3]$ & $[1,1]$ & $[1,3]$ & $[2,4]$ & $[3,3]$ & $[3,5]$ & $[3,3]$
                    \end{tabular}
                    \label{tab: OxE verd=2}
                \end{table}
                \begin{table}[h!]
                    \centering
                    \setlength{\tabcolsep}{2.4pt} % Default value: 6pt
                    \caption{The distances from vertices in $N[O_2]$ to $B_3$ for \cref{case: OxE verd=2} when $p = 1$.
                    Note that the only safe sets, are diagonal safe pairs.}
                    \begin{tabular}{c | c c c c c c c c c}
                        $v \in N[O_2]$ & $v_{x,y}$ & $v_{x-1,y}$ & $v_{x+1,y}$ & $v_{x,y+1}$ & $v_{x,y-1}$ & $v_{x,y-2}$ & $v_{x-1,y-2}$ & $v_{x+1,y-2}$ & $v_{x,y-3}$ \\ \hline
                        $\vec{D}(B_3, v)$, $q=2$ & $[0,2]$ & $[1,1]$ & $[1,2]$ & $[1,1]$ & $[1,3]$ & $[2,2]$ & $[3,1]$ & $[3,2]$ & N/A \\
                        $\vec{D}(B_3, v)$, $q=3$ & $[0,2]$ & $[1,1]$ & $[1,2]$ & $[1,1]$ & $[1,3]$ & $[2,4]$ & $[3,3]$ & $[3,4]$ & $[3,3]$
                    \end{tabular}
                    \label{tab: OxE verd=2 p=1}
                \end{table}
            \end{case}
            
            Now say the Robber is localized to a set $O_3 = \{v_{x,y}, v_{x+1,y+1} \}$.
            If $p \geq 2$, the Cop probes $B_4 = \{v_{x-p+1,y}, v_{x-p,y-1} \}$ such that the distances from vertices in $N[O_3]$ to $B_4$ are given in \cref{fig:OxE small case}.
            \begin{figure}[h]
                \centering
                \begin{tikzpicture}[scale=2]
                    \node[draw, circle, fill=red, label=south: {\footnotesize $v_{x,y}$}, label=north: {\footnotesize $[p-1,p+1]$}] at (0,0) {};
                    \node[draw, circle, fill=red, label=south: {\footnotesize $v_{x+1,y+1}$}, label=north: {\footnotesize $[p+1,p+2]$}] at (1,0.9) {};
                    
                    \node[draw, circle, fill=red!50, label=south: {\footnotesize $v_{x-1,y}$}, label=north: {\footnotesize $[p-2,p]$}] at (-1,0) {};
                    \node[draw, circle, fill=red!50, label=south: {\footnotesize $v_{x+1,y}$}, label=north: {\footnotesize $[p,p+1]$}] at (1,0) {};
                    \node[draw, circle, fill=red!50, label=south: {\footnotesize $v_{x,y-1}$}, label=north: {\footnotesize $[p,p]$}] at (0,-0.9) {};
                    \node[draw, circle, fill=red!50, label=south: {\footnotesize $v_{x,y+1}$}, label=north: {\footnotesize $[p,p+2]$}] at (0,0.9) {};
                    
                    \node[draw, circle, fill=red!50, label=south: {\footnotesize $v_{x+2,y+1}$}, label=north: {\footnotesize $[p+1,p+1]$}] at (2,0.9) {};
                    \node[draw, circle, fill=red!50, label=south: {\footnotesize $v_{x+1,y+2}$}, label=north: {\footnotesize $[p+2,p+3]$}] at (1,1.8) {};
                \end{tikzpicture}
                \caption{The distances from vertices in $N[O_3]$ to $B_4$ for $p \geq 2$ as in the proof of \cref{prop: OxE small case}. 
                It can be seen that all vertices in $N[O_3]$ are resolved by $B_4$ and hence the Cop wins.}
                \label{fig:OxE small case}
            \end{figure}
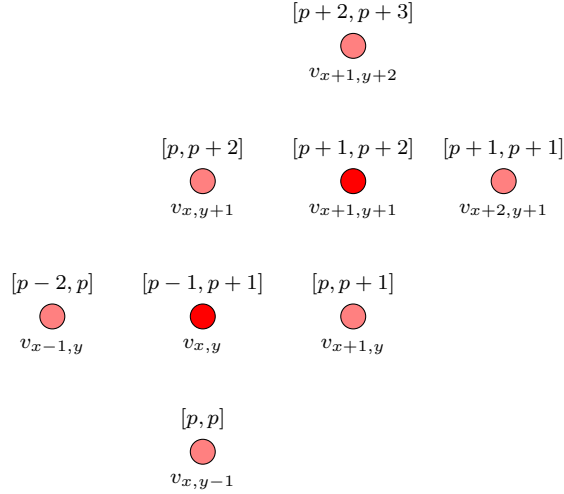
            
            If $p=1$, the Cop probes $B_4 = \{v_{x-1,y}, v_{x,y-1} \}$ such that the distances from vertices in $N[O_3]$ to $B_4$ are given in \cref{tab:OxE small case p=1}.
            Note that if $q=2$, then $\vec{D}(B_4,v_{x+1,y+2})=[2,4]$ at not $[3,4]$.
            \begin{table}[h]
                \centering
                \caption{The distances from vertices in $N[O_3]$ to $B_4$ for $p = 1$ as in the proof of \cref{prop: OxE small case}. 
                Note that if $q=2$, then $\vec{D}(B_4,v_{x+1,y+2})=[2,4]$ at not $[3,4]$.}
                %\small
                %\renewcommand{\arraystretch}{0.5}
                \resizebox{\textwidth}{!}{%
                \begin{tabular}{c|cccccccc}
                    $v \in N[O_3]$ & $v_{x,y}$ & $v_{x+1,y+1}$ & $v_{x,y-1}$ & $v_{x-1,y}$ & $v_{x+1,y}$ & $v_{x-1,y+1}$ & $v_{x,y+1}$ & $v_{x+1,y+2}$ \\ 
                    $\vec{D}(B_4,v)$ & $[1,1]$ & $[2,3]$ & $[2,0]$ & $[0,2]$ & $[1,2]$ & $[1,3]$ & $[2,2]$ & $[3,4]$
                \end{tabular}
                }
                \label{tab:OxE small case p=1}
            \end{table}
            
            All vertices in $N[O_3]$ are uniquely defined by their distance to $B_4$ and hence the Cop wins.
            If the Robber was localized to $O_3 = \{v_{x,y}, v_{x+1,y-1} \}$, the Cop probes $B_4 = \{v_{x-p+1,y}, v_{x-p,y+1} \}$ if $p \geq 2$ and $B_4 = \{v_{x-1,y}, v_{x,y+1} \}$ if $p=1$ such that results follow similarly.
        \end{proof}
        
    \subsection{Even by Even}
        For even $m$ and $n$, first consider the case where $m \geq n \geq 8$:
        \begin{proposition} \label{ExE main}
            Let $C_{2p} \square C_{2q}$ be a product of cycles with $p,q \geq 4$ and $p \geq q$.
            Then $\zeta(C_{2p} \square C_{2q}) = 2$.
        \end{proposition}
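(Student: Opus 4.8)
The plan is to exhibit an explicit two-cop winning strategy; combined with \cref{prop: no loc 1 product} and \cref{cor: Cgrid<4}, which together give $\zeta(C_{2p} \square C_{2q}) \in \{2,3\}$, this forces $\zeta(C_{2p} \square C_{2q}) = 2$. The strategy follows the same four-probe reduction as \cref{prop: OxE small case}. First the Cop probes $B_1 = \{v_{p,2q-1}, v_{p,q-1}\}$, so that by \cref{cor: ExE extra} the Robber is localized to a rectangular robber set $O_1 = \{v_{a,b}, v_{a+d_i,b}, v_{a,b+d_j}, v_{a+d_i,b+d_j}\}$, with side lengths $d_i, d_j$ defined as in \cref{prop: OxE small case} (the set may degenerate to a pair or a single vertex, by the final remark of \cref{lem: O extra}). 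The decisive structural feature of the even-by-even case is that each of $d_i$ and $d_j$ is the distance in an even cycle between a vertex and its reflection across a fixed column or row; hence both are \emph{even}, with $d_i \leq p$ and $d_j \leq q$.

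For the second turn I would re-centre on $O_1$ by probing $B_2 = \{v_{a+p,b+q}, v_{a+p,b}\}$, a translation of $B_1$. By \cref{cor: ExE extra} the region $R_2 = \{v_{s,t} : s \in \{a,\dots,a+p\},\ t \in \{b,\dots,b+q\}\}$ is then a cop house, and since $d_i \leq p$ and $d_j \leq q$ all of $O_1$ lies inside $R_2$. The heart of the argument is to analyse $N[O_1]$ against $B_2$. Each corner of $O_1$ lies on a fixed column or row of the $B_2$-reflection, so its reflected partners are pushed outside $N[O_1]$ and a Robber that remains on a corner is caught; every overhanging neighbour is instead matched with the vertex two steps away on the opposite side of the same corner. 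The simplification that is special to even cycles is that no two adjacent vertices can ever share a safe set, since a pair $\{v_{i,j}, v_{i+1,j}\}$ would require the odd number $2i+1$ to be congruent to $2a$ modulo $2p$. Consequently every surviving safe set is an axis-aligned safe pair at distance exactly two, and the distance-one situation (\cref{case: OxE d=1}) never occurs.

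From such a robber set $O_2$ the Cop finishes exactly as in \cref{prop: OxE small case}: a third probe $B_3$ of the form used in \cref{case: OxE hord=2} (for a horizontal pair) or \cref{case: OxE verd=2} (for a vertical pair) drives the Robber into a diagonal safe pair $O_3$, and a fourth probe $B_4$ then resolves every vertex of $N[O_3]$, so the Cop wins. Because each cycle has length at least eight, every bounded-radius neighbourhood of the (small) robber set embeds isometrically in the infinite square grid, so the distance tables of \cref{prop: OxE small case} carry over verbatim; in particular one checks that $d(v_x, v_{x-p+1})$ and the other entries of the final probes agree with their odd-cycle counterparts.

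I expect the main obstacle to be the bookkeeping in the second probe, and specifically the boundary subcases $d_i = p$ and $d_j = q$. When $d_i = p$ (respectively $d_j = q$) two corners of $O_1$ share the fixed column (row) of the $B_2$-reflection and additional overhanging neighbours appear on the east (north) side of the cop house; in each such subcase one must verify that intersecting a (possibly four-element) safe set with $N[O_1]$ leaves at most an axis-aligned pair, and that no surviving pair has distance greater than two. A minor point to confirm is that $B_2$ still induces a cop house even when $a \equiv 0 \pmod{2p}$ makes its column reflection coincide with that of $B_1$; this holds because \cref{cor: ExE extra} grants the cop-house property for \emph{every} translation of $B_1$.
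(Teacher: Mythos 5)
Your strategy is essentially the paper's second-probe analysis (\cref{strat: OxE long}) applied unconditionally, and it is sound through the localization to an axis-aligned pair $O_2$ at distance two; your parity observation ruling out \cref{case: OxE d=1} is correct. The genuine gap is the finish. Your claim that the distance tables of \cref{prop: OxE small case} ``carry over verbatim'' because bounded-radius neighbourhoods embed isometrically in the grid fails precisely for the fourth probe, which is \emph{not} bounded-radius: $B_4 = \{v_{x-p+1,y}, v_{x-p,y-1}\}$ operates at antipodal scale, and this is exactly where even and odd cycles diverge, since $d_{C_{2p}}(x+1,x-p) = \min\{p+1,p-1\} = p-1$ while $d_{C_{2p+1}}(x+1,x-p) = p$. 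Concretely, for $O_3 = \{v_{x,y}, v_{x+1,y+1}\}$ in $C_{2p} \square C_{2q}$ one computes $\vec{D}(B_4, v_{x+1,y}) = \vec{D}(B_4, v_{x,y-1}) = \vec{D}(B_4, v_{x+2,y+1}) = [p,p]$, so this probe leaves a three-vertex robber set containing a fresh diagonal pair, and your strategy cycles rather than terminates. The step is repairable --- the paper itself resolves diagonal pairs in an even-by-even product with the bounded-radius probe $B_4 = \{v_{x+1,y+1}, v_{x+1,y-2}\}$ in \cref{prop: C2mxC6}, whose table does transfer when $p,q \geq 4$ --- but as written the final probe fails and with it the proof.

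You have also missed the simplification that makes the paper's proof short: it never enters the diagonal-pair phase at all. The key device is \cref{strat: OxE short}: whenever the robber set satisfies $d_i \leq p-2$ and $d_j \leq q-2$, a single translated copy of $B_1$ creates a cop house containing all of $N[O_1]$ (by \cref{cor: ExE extra}), and the Cop wins on that very probe. The paper applies this case split already at the second turn, reserving the probe $B_2' = \{v_{a+p,b+q}, v_{a+p,b}\}$ for the boundary cases $d_i > p-2$ or $d_j > q-2$ (which, by your own parity remark, reduce to $d_i = p$ or $d_j = q$); and once the Robber is confined to a pair at distance at most two, the hypothesis $p,q \geq 4$ guarantees $d_i \leq 2 \leq p-2$ and $d_j \leq 2 \leq q-2$, so one further translated-$B_1$ probe ends the game. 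Replacing your third and fourth probes by this single trapping move both closes the gap above and removes the boundary bookkeeping you flagged as the main obstacle.
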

        \begin{proof}
            In the first turn, the Cop probes $B_1 = \{v_{p,2q-1}, v_{p,q-1} \}$ such that the Robber is localized to $O_1 = \{v_{i,j}, v_{2p-i,j}, v_{i,2q-2-j}, v_{2p-i,2q-2-j} \}$ for $i \in \{ 0,1,\ldots,2p\}$ and $j \in \{0,1,\ldots,2q-1\}$ by \cref{cor: ExE extra}.
            The Cop's second probe depends on $d_i$ and $d_j$, where $d_i = d(v_{i,j}, v_{2p-i,j})$ and $d_j = d(v_{i,j}, v_{i,2q-2-j})$.
            These two distances are calculated as follows: $d_j = \min\{2j+2,2q-2-2j \}$ as before and $d_i = \min\{2i,2p-2i \}$.
            The Robber set is again given by $O_1 = \{v_{a,b}, v_{a+d_i,b}, v_{a,b+d_j}, v_{a+d_i,b+d_j} \}$.
            \begin{strat}[$\mathbf{d_i \leq p-2}$ and $\mathbf{d_j \leq q-2}$] \label{strat: OxE short}
                The Cop probes \\ $B_2 = \{v_{a-1+p, b-1+q}, v_{a-1+p, b-1} \}$ such that $B_2$ is a translation of $B_1$.
                Let $f$ be a translation such that $B_2 = f(B_1)$ and let $R_2$ be the set of all vertices $v_{w,z}$ where $w = a-1,a,\ldots,a-1+p$ and $z = b-1,b,\ldots,b-1+q$.
                Then $R_2 = f(R_1)$ such that it is a cop house by \cref{cor: ExE extra}.
                Since $a+d_i \leq a+p-2$ and $b+d_j \leq b+q-2$, the neighbourhood $N[O_1]$ is contained in $R_2$ and therefore the Cop wins.
            \end{strat}
            
            \begin{strat}[$\mathbf{d_i > p-2}$ or $\mathbf{d_j > q-2}$] \label{strat: OxE long}
                This means that at least one of the following holds: $d_i \in \{p-1,p \}$ or $d_j \in \{q-1,q \}$.
                Note that in the proof of \cref{prop: OxE small case} we have that $d_j > q-2$ and therefore a similar strategy can be used here.
                The Cop now probes $B_2' = \{v_{a+p,b+q}, v_{a+p,b} \}$ as in Equation \cref{eq: B2 OxE} such that we again have that each safe set is either a horizontal or vertical safe pair.
            \end{strat}
            Notice that two vertices $v_{i_1,j_1}, v_{i_2,j_2}$ are in the same safe set if and only if $i_1+i_2 \equiv 2x\bmod{(2p)}$.
            Therefore the same argument as in the proof of \cref{prop: OxE small case} can be used to show that if vertices $v_{i_1,j_1}$ and $v_{i_2,j_2}$ are not part of the same neighbourhood $N[\{u_i \}]$, they are not part of the same safe set.
            Since every safe set is a vertical or diagonal pair of distance one or two, the Cop wins in the next turn by using \cref{strat: OxE short}.
        \end{proof}
         
        Now the consider the case where $n=6$:
        \begin{proposition} \label{prop: C2mxC6}
            Let $C_{2p} \square C_6$ be a product of cycles with $p \geq 3$.
            Then \\ $\zeta(C_{2p} \square C_6) = 2$.
        \end{proposition}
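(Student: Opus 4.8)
The lower bound $\zeta(C_{2p} \square C_6) \geq 2$ is immediate from \cref{prop: no loc 1 product}, so the work is to exhibit a winning strategy for two cops. Since $n = 6 = 2q$ with $q = 3$, this case sits exactly between the odd-by-even analysis of \cref{prop: OxE small case}, which already treats the row cycle $C_6$ through $q \in \{2,3\}$, and the even-by-even analysis of \cref{ExE main}, which treats an even column cycle $C_{2p}$ but only for $q \geq 4$. My plan is to reuse the column bookkeeping from \cref{ExE main} together with the row bookkeeping from \cref{prop: OxE small case}, since the row factor here is identical to the $q = 3$ instance of the latter; the restriction $p \geq 3$ keeps us in the genuine $n = 6$ regime and away from the exceptional $n = 4$ case.

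First I would verify that the structural conclusion of \cref{cor: ExE extra} survives at $q = 3$: probing $B_1 = \{v_{p,5}, v_{p,2}\}$ localizes the Robber to a set $O_1 = \{v_{i,j}, v_{2p-i,j}, v_{i,2q-2-j}, v_{2p-i,2q-2-j}\}$, and $R_1$ is a cop house. The second difference $DD(B_1, v_{i,j}) = 2|q-1-j| - q$ depends only on the row, so the safe-house partition of the rows is exactly as in \cref{lem: O extra}; and because $q = 3$ the only nontrivial vertical pairing is $j \mapsto 4 - j \pmod{6}$, giving $d_j := d(v_{i,j}, v_{i,4-j}) \in \{0, 2\}$, precisely as in \cref{prop: OxE small case}. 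The columns behave as in \cref{ExE main}: $d_i := d(v_{i,j}, v_{2p-i,j}) = \min\{2i, 2p - 2i\}$ is even and satisfies $0 \le d_i \le p$. A short check inside $R_1$ (rows $2,3,4,5$ and columns $0,\dots,p$) confirms that no two distinct vertices lie in a common safe set, so $R_1$ remains a cop house.

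With this in hand the game is driven as before. Writing $O_1 = \{v_{a,b}, v_{a+d_i, b}, v_{a, b+d_j}, v_{a+d_i, b+d_j}\}$, I would translate $B_1$ for the second probe. When $d_i \le p - 2$ and $d_j = 0$, the neighbourhood $N[O_1]$ spans at most three rows and $p+1$ columns, so the translated cop house $R_2 = g(R_1)$ contains $N[O_1]$ and the Cop wins, mirroring the short strategy of \cref{ExE main}. Otherwise $d_j = 2$ or $d_i \in \{p-1, p\}$, and probing $B_2' = \{v_{a+p, b+q}, v_{a+p, b}\}$ as in \cref{eq: B2 OxE} reduces every safe set to a horizontal or vertical safe pair at distance one or two, by the same neighbourhood-by-neighbourhood argument used in \cref{prop: OxE small case} and in the concluding argument of \cref{ExE main}. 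Once the Robber is confined to such a pair, the third and fourth probes are exactly \cref{case: OxE d=1}, \cref{case: OxE hord=2}, \cref{case: OxE verd=2} and the probe $B_4$ of \cref{prop: OxE small case}: these computations are confined to $N[O_2]$ and $N[O_3]$ and are insensitive to the parity of the column cycle for $p \ge 3$, so the $q = 3$ rows of the distance tables apply verbatim and the Cop wins.

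The main obstacle I anticipate is the ``long'' second-probe case, where I must check that the even column cycle $C_{2p}$ combined with the short row cycle $C_6$ produces no safe pairs beyond distance two; in particular the antipodal columns (where $d_i = p$) and the tight $\bmod\,6$ wraparound in the rows must not create extra coincidences among the distance vectors. The smallest instance $p = 3$, where $C_{2p} = C_6$ and the two factors coincide, is the likeliest source of a degeneracy and I would treat it separately; there one has $d_i \in \{0, 2\}$ only, which in fact simplifies rather than complicates the reduction.
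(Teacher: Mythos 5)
There is a genuine gap, and it sits exactly where your proposal waves the details through: the endgame probe. You claim that the probe $B_4 = \{v_{x-p+1,y}, v_{x-p,y-1}\}$ from \cref{prop: OxE small case} applies ``verbatim'' to the diagonal pair $O_3 = \{v_{x,y}, v_{x+1,y+1}\}$ because the computations are ``confined to $N[O_3]$ and insensitive to the parity of the column cycle.'' Neither assertion holds: the probe vertices lie in columns $x-p+1$ and $x-p$, at horizontal distance roughly $p$ from $O_3$, so the distances wrap around the column cycle and its parity is decisive. Concretely, in $C_{2p}$ the column $x-p$ is antipodal to $x$ (indeed $x-p \equiv x+p \pmod{2p}$), and one computes
\begin{align*}
\vec{D}\bigl(B_4, v_{x,y-1}\bigr) &= [(p-1)+1,\; p+0] = [p,p],\\
\vec{D}\bigl(B_4, v_{x+2,y+1}\bigr) &= [(p-1)+1,\; (p-2)+2] = [p,p],
\end{align*}
since $d_{C_{2p}}(x+2, x-p+1) = p-1$ (the short way around), whereas in $C_{2p+1}$ this distance is $p$ and the vectors differ. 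Both vertices lie in $N[O_3]$, so your $B_4$ leaves a safe pair $\{v_{x,y-1}, v_{x+2,y+1}\}$ with offset $(2,2)$ --- a configuration not covered by \cref{case: OxE d=1}, \cref{case: OxE hord=2} or \cref{case: OxE verd=2} --- and the termination argument collapses. (You can check this concretely at $p=3$: with $x=y=0$ the vertices $v_{0,5}$ and $v_{2,1}$ both receive $[3,3]$.) Ironically, your final paragraph correctly identifies antipodal columns as the danger, but locates it in the second probe, where the congruence condition $i_1+i_2 \equiv 2a \pmod{2p}$ does rule out spurious pairs, rather than in the fourth probe, where the danger is real.

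The paper's proof avoids precisely this trap. It runs the early turns by the imagination strategy on $C_{2p+1} \square C_6$ (noting along the way that, since $d_i = \min\{2i, 2p-2i\}$ is always even, horizontal safe pairs at distance one never arise in the real game, so \cref{case: OxE d=1} is not needed), but at the diagonal endgame it abandons the imagination strategy entirely and probes $B_4 = \{v_{x+1,y+1}, v_{x+1,y-2}\}$: two vertices in the single column $x+1$ adjacent to $O_3$, occupying antipodal rows of $C_6$. Because this probe genuinely is local in the column direction, its distance table is parity-independent, and the explicit verification in the paper shows all of $N[O_3]$ is resolved. Your first two probes and the reduction to $O_2 \in \{\{v_{x,y},v_{x+2,y}\}, \{v_{x,y},v_{x,y-2}\}\}$ are sound and essentially match the paper; to repair the proposal you must replace the long-range $B_4$ with a probe whose column coordinates stay within bounded distance of $O_3$, as the paper does.
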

        \begin{proof}
            The Cop plays with two cops by using the imagined localization game on $C_{2p+1} \square C_6$.
            In the first turn, the Cop probes $B_1 = \{v_{p,5}, v_{p,2} \}$ as in the imagined game.
            A similar proof to \cref{lem: O extra} can be given here to show that all safe sets have the form $O_1 = \{v_{i,j}, v_{2p-i,j}, v_{i,4-j}, v_{2p-i,4-j} \}$ or equivalently $O_1 = \{v_{i,j}, v_{i+d_i,j}, v_{i,j+d_j}, v_{i+d_i,j+d_j} \}$ where $d_i = d(v_{i,j}, v_{2p-i,j})$ and $d_j = d(v_{i,j}, v_{i,4-j})$ for $i \in \{0,1,\ldots,2p-1 \}$ and $j \in \{0,1,\ldots,5 \}$.
            Therefore the safe sets in the real game are the same as in the imagined game with the exception that $i \in \{0,1,\ldots,2p \}$ in the imagined game.
            Therefore all robber sets in the real game are possible in the imagined game.
            For the second turn in the imagined game the Cop uses the strategy used in the second probe of the proof of \cref{prop: OxE small case}  to localize the Robber to a safe set containing only two vertices, a distance of one or two apart.
            Since $i\leq 2p$, the real game does not contain horizontal safe pairs at distance 1 and hence the Robber is localized to a robber set of the form $O_2 = \{v_{x,y}, v_{x+2,y} \}$ or $O_2 = \{v_{x,y}, v_{x,y-2} \}$.
            These two cases are possible in the imagined game and handled in \cref{case: OxE hord=2} and \cref{case: OxE verd=2} for the Cop's next probe.
            Thus the Robber is localized to a diagonal safe pair $O_3$.
            
            If $O_3 = \{v_{x,y}, v_{x-1,y-1} \}$, the imagination strategy is not used.
            The Cop probes $B_4 = \{v_{x+1,y+1}, v_{x+1,y-2} \}$.
            The explicit distances from the vertices in $N[O_3]$ to $B_4$ are given in \cref{fig: Even/Even C6 short strat}, where it can be seen that no two distances are the same.
            The index of a vertex is shown below the vertex and its distance to $B_4$ is shown above it.
            The vertices of $B_4$ are squares, the vertices in $O_3$ are darker red and the vertices in $N(O_3)$ in lighter red.
            \begin{figure}[h]
                \centering
                \begin{tikzpicture}[scale=1.5]
                    \node[draw, circle, label=south: {\footnotesize $v_{x,y}$}, fill=red, label=north: {\footnotesize $[2,3]$}] at (0,0) {};
                    \node[draw, circle, label=south: {\footnotesize $v_{x-1,y-1}$}, fill=red, label=north: {\footnotesize $[4,3]$}] at (-1,-1) {};
                    
                    \node[draw, minimum size=8.5, label=south: {\footnotesize $v_{x+1,y+1}$}, fill=gray] at (1,1) {};
                    \node[draw, minimum size=8.5, label=south: {\footnotesize $v_{x+1,y-2}$}, fill=gray] at (1,-2) {};
                    
                    \node[draw, circle, label=south: {\footnotesize $v_{x,y+1}$}, fill=red!50, label=north: {\footnotesize $[1,4]$}] at (0,1) {};
                    \node[draw, circle, label=south: {\footnotesize $v_{x,y-1}$}, fill=red!50, label=north: {\footnotesize $[3,2]$}] at (0,-1) {};
                    \node[draw, circle, label=south: {\footnotesize $v_{x+1,y}$}, fill=red!50, label=north: {\footnotesize $[1,2]$}] at (1,0) {};
                    \node[draw, circle, label=south: {\footnotesize $v_{x-1,y}$}, fill=red!50, label=north: {\footnotesize $[3,4]$}] at (-1,0) {};
                    
                    \node[draw, circle, label=south: {\footnotesize $v_{x-2,y-1}$}, fill=red!50, label=north: {\footnotesize $[5,4]$}] at (-2,-1) {};
                    \node[draw, circle, label=south: {\footnotesize $v_{x-1,y-2}$}, fill=red!50, label=north: {\footnotesize $[5,2]$}] at (-1,-2) {};
                    
                    \node[draw, circle, fill=gray] at (1,-1) {};
                    \node[draw, circle, fill=gray] at (0,-2) {};
                \end{tikzpicture}
                \caption{The neighbourhood $N[O_3]$ and probe $B_4$ as in the proof of \cref{prop: C2mxC6}.
                The index of a vertex is shown below the vertex and its distance to $B_4$ is shown above it.
                The vertices of $B_4$ are squares, the vertices in $O_3$ are darker red and the vertices in $N(O_3)$ in lighter red.
                Note that no two distances are the same.}
                \label{fig: Even/Even C6 short strat}
            \end{figure}
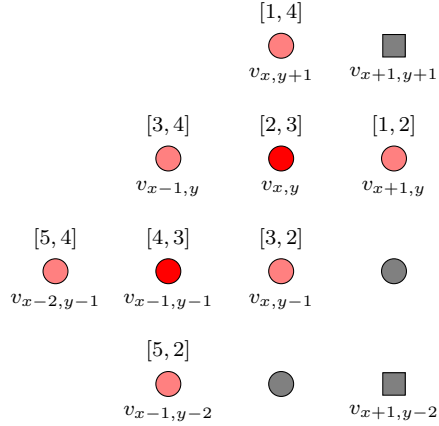
                    
            Note that if $O_3= \{v_{x,y}, v_{x+1,y-1} \}$, the Cop probes $B_4 = \{v_{x-1,y+1}, v_{x-1,y-2} \}$ and results follow similarly.
            Thus the Robber is located and the Cop wins.
        \end{proof}
        
        In order to calculate the localization number of $C_{2p} \square C_4$, the following lemmas are used:
        \begin{lemma}[\cite{Newlocbounds}] \label{lem: Bipart dist}
            Let $G$ be a bipartite graph, where $v \in V(G)$ and $w \in N(v)$. 
            Say the Cop probes $B= \{b_1, b_2, \ldots, \}$ in some turn and let $d_i=d(b_i, v)$.
            Then $d(b_i,w) \in \{d_i-1, d_i+1 \}$.
        \end{lemma}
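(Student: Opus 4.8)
The plan is to combine two elementary facts: a parity constraint arising from bipartiteness and a tightness constraint arising from the triangle inequality. First I would fix a bipartition $V(G) = X \cup Y$ in which every edge joins $X$ to $Y$. The standard parity property of bipartite graphs states that for any vertices $a$ and $c$, the distance $d(a,c)$ is even when $a$ and $c$ lie in the same part and odd when they lie in different parts. Since $v$ and $w$ are adjacent they lie in opposite parts, so for each probe vertex $b_i$ exactly one of $d(b_i,v)$ and $d(b_i,w)$ is even and the other odd. In particular $d(b_i,v)$ and $d(b_i,w)$ always differ in parity, which immediately rules out the possibility $d(b_i,w) = d_i$.

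Next I would invoke the triangle inequality along the edge $vw$. Because $d(v,w)=1$, we have $|d(b_i,v) - d(b_i,w)| \leq d(v,w) = 1$, so that $d(b_i,w) \in \{d_i-1, d_i, d_i+1\}$. Combining this with the parity observation---namely that $d(b_i,w)$ cannot equal $d_i$---eliminates the middle value and leaves exactly $d(b_i,w) \in \{d_i-1, d_i+1\}$, which is the assertion of the lemma. Since $b_i$ was an arbitrary element of the probe $B$, the conclusion holds for every probed vertex simultaneously.

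There is no real obstacle here; the only step requiring any care is the justification of the parity property, which I would recall briefly: any walk between two vertices of the same part uses an even number of edges, while a walk between vertices of different parts uses an odd number, and since a shortest path is itself such a walk, the distance inherits this parity. With that in hand the lemma is immediate. The reason this innocuous fact is worth isolating is that it will later let the Cop deduce, in the grid $C_{2p} \square C_4$, that each coordinate of a distance vector changes by exactly $\pm 1$ under a single Robber move, thereby sharply constraining how safe sets can evolve from turn to turn.
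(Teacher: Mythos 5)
Your proof is correct: the parity property of bipartite graphs rules out $d(b_i,w)=d_i$, and the triangle inequality along the edge $vw$ confines $d(b_i,w)$ to $\{d_i-1,d_i,d_i+1\}$, leaving exactly $\{d_i-1,d_i+1\}$. The paper itself imports this lemma from Bonato et al.\ without proof, and your argument is the standard (essentially the only natural) one, so there is nothing to reconcile.
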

        \begin{lemma}[\cite{Dimensions}] \label{lem: dim(CxC)}
            Let $C_m \square C_n$ be the product of cycles where $m,n \geq 3$.
            Then
            \begin{equation*}
                \dim(C_m \square C_n) = 
                \begin{cases}
                    3 & \text{if $m$ or $n$ is odd} \\
                    4 & \text{otherwise.}
                \end{cases}
            \end{equation*}
        \end{lemma}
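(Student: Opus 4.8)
The plan is to read off both upper bounds from the machinery already in the excerpt and then spend the real effort on the two matching lower bounds. Throughout I would use the distance decomposition $d_{C_m \square C_n}\big((i,j),(k,l)\big) = d_{C_m}(i,k) + d_{C_n}(j,l)$ and the vertex-transitivity of the torus to place one landmark at the origin. For the upper bounds I would simply invoke \cref{prop: dim(GH) upbound} together with the standard fact $\dim(C_k)=2$ for $k\ge 3$ and the values of $\psi(C_k)$ recorded above. If $m$ or $n$ is odd, take the odd factor as $H$; then $\psi(H)=2$ and $\dim(C_m \square C_n)\le 2+2-1=3$. If both are even, then $\psi(C_n)=3$ and the same proposition gives $\dim(C_m \square C_n)\le 2+3-1=4$. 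Thus only the lower bounds $\dim\ge 3$ (always) and $\dim\ge 4$ (both even) remain.

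To show two landmarks never suffice, I would suppose $S=\{(0,0),(a,b)\}$ and write $f(x)=d_{C_m}(x,0)$ and $g(y)=d_{C_n}(y,0)$, both even functions. If $a=0$ or $b=0$, then a projection of $S$ is a single point, which cannot resolve a cycle of order at least three, so \cref{lem: subset prod} already yields a confused same-column or same-row pair. If $a=m/2$, the reflection $(i,j)\mapsto(-i,j)$ preserves the distance to both landmarks and gives a confused pair whenever $i\notin\{0,m/2\}$; the case $b=n/2$ is symmetric. In the remaining generic case I would test the pair $u=(i,j)$ and $u'=(-i,-j)$. Since $f,g$ are even, $u$ and $u'$ are automatically equidistant from the origin, so the only requirement is equal distance to $(a,b)$, which reduces to the single equation $\phi(i)+\psi(j)=0$, where $\phi(i)=f(i+a)-f(i-a)$ and $\psi(j)=g(j+b)-g(j-b)$. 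Both $\phi$ and $\psi$ are odd and, because $a\notin\{0,m/2\}$ and $b\notin\{0,n/2\}$, not identically zero; their ranges are therefore nonzero and symmetric about $0$, and a short range-matching argument produces a solution with $(i,j)\neq(-i,-j)$. This gives a confused diagonal pair and hence $\dim\ge 3$.

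For the even-by-even bound $\dim\ge 4$ I would first record the parity identity $d_{C_{2p}}(x,0)\equiv x\pmod 2$. Consequently every coordinate of a distance vector in $C_{2p}\square C_{2q}$ has the parity of $i+j$, so any confused pair lies in a single class of the bipartition, and, crucially, every difference function $\phi_t(i)=f(i+a_t)-f(i-a_t)$ and $\psi_t(j)=g(j+b_t)-g(j-b_t)$ attached to a landmark $s_t$ satisfies $\phi_t(i)\equiv 2a_t\equiv 0$ and $\psi_t(j)\equiv 2b_t\equiv 0 \pmod 2$, i.e.\ takes only even values. With three landmarks $s_1=(0,0),s_2,s_3$ the reflection pair $u,u'$ now has to satisfy the two equations coming from $s_2$ and $s_3$ simultaneously, and the single negation pair no longer has enough freedom. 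I would therefore enlarge the family of candidate pairs using the reflections of each even cycle about both of its fixed points (the Klein four-group of coordinate reflections available on an even cycle) and exploit the evenness of all the $\phi_t,\psi_t$ to run a parity-and-pigeonhole count; failing a uniform argument, a finite case analysis on the residues of the landmark coordinates modulo $p$ and $q$ would finish it.

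I expect this even-by-even lower bound to be the main obstacle. The upper bounds are immediate and the bound $\dim\ge 3$ is essentially mechanical, whereas here the extra antipodal symmetry of even cycles is exactly what forces the fourth landmark, and producing a confused pair for \emph{every} placement of three landmarks requires the careful parity bookkeeping described above rather than a single clean symmetry, since no global isometry need fix all three landmarks simultaneously.
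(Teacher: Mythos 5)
The paper never proves this lemma at all --- it is imported verbatim from \cite{Dimensions} --- so your attempt stands alone, and it contains a genuine gap precisely in the part you call ``essentially mechanical'': the lower bound $\dim(C_m \square C_n)\geq 3$. Your generic case rests on the claim that, because $\phi(i)=f(i+a)-f(i-a)$ and $\psi(j)=g(j+b)-g(j-b)$ are odd and not identically zero, a ``short range-matching argument'' yields a solution of $\phi(i)+\psi(j)=0$ with $(i,j)\neq(-i,-j)$. That is false: two symmetric nonzero ranges need not meet outside $0$. Concretely, take $C_5\square C_4$ with landmarks $(0,0)$ and $(2,1)$, which lies squarely in your generic case ($a=2$, and $b=1\notin\{0,2\}$). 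With $f(x)=d_{C_5}(x,0)$ and $g(y)=d_{C_4}(y,0)$ one computes $\phi(0),\dots,\phi(4)=0,1,1,-1,-1$ and $\psi(0),\dots,\psi(3)=0,2,0,-2$. The only solutions of $\phi(i)+\psi(j)=0$ are $(i,j)=(0,0)$ and $(0,2)$, and both are fixed by the negation $(i,j)\mapsto(-i,-j)$, so your family of candidate pairs produces no confused pair --- even though this two-element set is, of course, not resolving: the vertices $(0,1)$ and $(1,0)$ both have distance vector $[1,2]$. The obstruction is exactly the parity fact you yourself invoke later: on the even factor $\psi$ takes only even values, while on an odd cycle with $2a\equiv\pm 1\pmod{m}$ the function $\phi$ takes values only in $\{0,\pm 1\}$, so the two ranges meet only at $0$, and all zeros sit at negation-fixed points. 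Any correct proof must locate confused pairs outside the single negation orbit (the pair $(0,1),(1,0)$ above is of that kind), and that is where the actual work in \cite{Dimensions} lies.

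The rest splits cleanly. Your upper bounds via \cref{prop: dim(GH) upbound} with $\dim(C_k)=2$ and the stated values of $\psi(C_k)$ are correct, as are your degenerate cases $a=0$ or $b=0$ (via \cref{lem: subset prod}) and $a=m/2$ or $b=n/2$ (the coordinate reflection fixes both landmarks there). But the even-by-even bound $\dim\geq 4$ is, by your own admission, only a plan: a ``parity-and-pigeonhole count'' with a fallback to ``a finite case analysis on the residues of the landmark coordinates modulo $p$ and $q$'' --- which is not a finite task uniformly in $p$ and $q$, since those residues range over sets that grow with the cycle lengths. So neither lower bound is established: the first has a concrete counterexample to its method, and the second is unexecuted.
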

        
        \begin{proposition} \label{prop: C2mxC4 >2}
            Let $C_{2p} \square C_4$ be a product of cycles with $p \geq 2$.
            Then $\zeta(C_{2p} \square C_4) > 2$.
        \end{proposition}
        \begin{proof}
            Assume that the Cop probes $B = \{b_1,b_2 \}$.
            Then there are only three types of probes:
            \begin{enumerate}
                \item[Type 1:] The projection of $B$ onto $C_4$ is a single vertex. 
                \item[Type 2:] In the projection of $B$ onto $C_4$, the vertices of the projection are adjacent. 
                \item[Type 3:] In the projection of $B$ onto $C_4$, the vertices of the projection are distance two apart. 
            \end{enumerate}
            It follows from \cref{lem: subset prod} that for probes of Type 1 and 3 that every column will contain a safe pair and from the structure of $C_4$ it follows that this will be a vertical safe pair $S_2^v$.
            Also from \cref{lem: subset prod} for a probe of Type 2 every column is resolved by the probe.
            Therefore, since $\dim(C_{2p} \square C_4)>2$ by \cref{lem: dim(CxC)} there must exist a safe pair.
            We will show that for a probe of Type 2, every two adjacent columns contain two diagonal safe pairs.
            
            Let $B$ be of Type 2. 
            Without loss of generality, assume that $b_1 = v_{i,3}$ and $b_2 = v_{i,2}$ for some column $i$.
            Now consider a column $k$ and say $\vec{D}(B,v_{k,3}) = [d_1,d_2]$.
            Then by \cref{lem: Bipart dist} and the structure of $C_4$, the distances from $B$ to the vertices in column $k$ are given in \cref{tab: B to column k}.
            \begin{table}[h]
                \centering
                \caption{The distances from $B=\{v_{i,3}, v_{i,2} \}$ to the vertices in column $k$ for $C_{2p} \square C_4$.}
                \begin{tabular}{cc}
                    $v$ & $\vec{D}(B,v)$ \\ \hline
                    $v_{k,3}$ & $[d_1,d_2]$ \\
                    $v_{k,2}$ & $[d_1+1,d_2-1]$ \\
                    $v_{k,1}$ & $[d_1+2,d_2]$ \\
                    $v_{k,0}$ & $[d_1+1,d_2+1]$
                \end{tabular}
                \label{tab: B to column k}
            \end{table}
            
            Now compare this to the distances from $B$ to the vertices in column $k+1$ as given in \cref{tab: B to column k+1}.
            The table gives all the possible distances to the vertices in column $k+1$, as it follows from \cref{lem: Bipart dist}.
            \begin{table}[h]
                \centering
                \caption{All possible distances from $B=\{v_{i,3}, v_{i,2} \}$ to the vertices in column $k+1$ for $C_{2p} \square C_4$ as by \cref{lem: Bipart dist}.}
                \begin{tabular}{ccccc}
                    $v$ & $\vec{D}(B,v)$ & $\vec{D}(B,v)$ & $\vec{D}(B,v)$ & $\vec{D}(B,v)$ \\ \hline
                    $v_{k+1,3}$ & $[d_1+1,d_2+1]$ & $[d_1+1,d_2-1]$ & $[d_1-1,d_2+1]$ & $[d_1-1,d_2-1]$ \\
                    $v_{k+1,2}$ & $[d_1+2,d_2]$ & $[d_1+2,d_2-2]$ & $[d_1,d_2]$ & $[d_1,d_2-2]$ \\
                    $v_{k+1,1}$ & $[d_1+3,d_2+1]$ & $[d_1+3,d_2-1]$ & $[d_1+1,d_2+1]$ & $[d_1+1,d_2-1]$ \\
                    $v_{k+1,0}$ & $[d_1+2,d_2+2]$ & $[d_1+2,d_2]$ & $[d_1,d_2+2]$ & $[d_1,d_2]$
                \end{tabular}
                \label{tab: B to column k+1}
            \end{table}
            
            It is clear from the tables that every two adjacent columns contain two diagonal safe pairs.
            We now consider two possibilities of a Robber set for the second turn:
            \begin{strat}[Diagonal safe pair] \label{strat: diag pair}
                If probe $B_2$ is of Type 1 or 3, a vertical safe pair will exist.
                This safe pair will either contain a vertex in row one and row three, or contain a vertex in row two and row four.
                It follows that the Robber can move to a safe pair in the next round.
                If probe $B_2$ is of Type 2, there is at least one other diagonal safe pair to move to. 
            \end{strat}
            \begin{strat}[Vertical safe pair] \label{strat: vert pair}
                As in the previous strategy, if $B_2$ is of Type 1 or 3 the Robber will either be safe or be able to move to a vertical safe pair.
                Hence assume that $B_2 = \{a_1, a_2 \}$ is of Type 2.
                If $a_1$ is in the same row as the Robber, then a diagonal safe pair will exist in columns $a_1$ and $a_1+1$ by \cref{tab: B to column k,tab: B to column k+1}.
                Otherwise if $a_1$ is not in the same row as the Robber, it again follows from \cref{tab: B to column k,tab: B to column k+1} that the Robber will be able to move to a diagonal safe pair. 
            \end{strat}
            
            The Robber can therefore perpetually avoid capture by using \cref{strat: diag pair} and \cref{strat: vert pair}.
        \end{proof}
        \begin{proposition} \label{prop: C2mxC4 <4}
            Let $C_{2p} \square C_4$ be the product of cycles with $p \geq 2$.
            Then $\zeta(C_{2p} \square C_4) \leq 3$.
        \end{proposition}
        \begin{proof}
            This already holds for $p \geq 4$ by Equation \cref{eq: cycle upbound}.
            Let $p=3$ and say the Cop probes $B_1 = \{v_{0,3}, v_{0,1}, v_{1,3} \}$ such that the distances to the vertices are given in \cref{tab: C6xC4 3 cops B1}.
            \begin{table}[h]
                \centering
                \caption{The distances $\vec{D}(B_1,v_{i,j})$ from probe $B_1$ to vertices $v_{i,j}$ in $C_6 \square C_4$.}
                \begin{tabular}{c|ccccccc}
                    $j=3$ & $[0,2,1]$ & $[1,3,0]$ & $[2,4,1]$ & $[3,5,2]$ & $[2,4,3]$ & $[1,3,2]$ \\
                    $j=2$ & $[1,1,2]$ & $[2,2,1]$ & $[3,3,2]$ & $[4,4,3]$ & $[3,3,4]$ & $[2,2,3]$ \\
                    $j=1$ & $[2,0,3]$ & $[3,1,2]$ & $[4,2,3]$ & $[5,3,4]$ & $[4,2,5]$ & $[3,1,4]$ \\
                    $j=0$ & $[1,1,2]$ & $[2,2,1]$ & $[3,3,2]$ & $[4,4,3]$ & $[3,3,4]$ & $[2,2,3]$ \\ \hline
                    & $i=0$ & $i=1$ & $i=2$ & $i=3$ & $i=4$ & $i=5$
                \end{tabular}
                \label{tab: C6xC4 3 cops B1}
            \end{table}
            
            From the table it can be seen that all safe sets have the form $\{v_{i,0}, v_{i,2} \}$ for $i \in \{0,1,\ldots,5 \}$.
            In the second turn, the Cop probes $B_2 = \{v_{i,0}, v_{i-1,1}, v_{i+1,1} \}$ such that $N[O_1]$ is resolved as shown in \cref{tab: C6xC4 3 cops B2}.
            \begin{table}[h]
                \centering
                \footnotesize
                \caption{The distances from the vertices in $N[O_1]$ to probe $B_2$ for $C_6 \square C_4$.}
                \begin{tabular}{c|cccccccc}
                    $v_{i,j} \in N[O_1]$ & $v_{i,0}$ & $v_{i-1,0}$ & $v_{i+1,0}$ & $v_{i,1}$ & $v_{i,2}$ & $v_{i-1,2}$ & $v_{i+1,2}$ & $v_{i,3}$ \\
                    $\vec{D}(B_2,v_{i,j})$ & $[0,2,2]$ & $[1,1,2]$ & $[1,3,1]$ & $[1,1,1]$ & $[2,2,2]$ & $[3,1,3]$ & $[3,3,1]$ & $[1,3,3]$
                \end{tabular}
                \label{tab: C6xC4 3 cops B2}
            \end{table}
            
            The case when $p=2$ follows in a similar fashion such that three cops are enough for $p \geq 2$.
        \end{proof}
        \Cref{prop: C2mxC4 >2,prop: C2mxC4 <4} together prove that $\zeta(C_{2p} \square C_4) = 3$.
        This completes all cases for $m$ and $n$ such that \cref{thm: cyclic grid loc} has been proved.

\section{Conclusions}
    In this paper, we showed that $\zeta( G \square H) \geq \max\{\zeta(G), \zeta(H)\}$ and that $\zeta(G \square H) \leq \zeta(G) + \psi(H) - 1$.
    We also showed that if $m=n=3$ or if $m$ is even while $n=4$, then $\zeta(C_m \square C_n)=3$ and that otherwise, $\zeta(C_m \square C_n)=2$.
    
    Note that cycles obtain both these lower and upper bounds.
    It would be worthwhile to investigate which other classes of graphs attain either the lower or the upper bound.

\bibliography{Article}
\end{document}